\newtheorem{theor}{Theorem}[section]
\newtheorem{lemma}{Lemma}[section]
\newtheorem{remark}{Remark}[section]
\newcommand{\R}{\mathbb R}
\newcommand{\E}{\mathbb E}
\newcommand{\ucp}{\stackrel{ucp}{\rightarrow}}
\newcommand{\pn}{\stackrel{\mathbb P}{\rightarrow}}
\newcommand{\stab}{\stackrel{d_{st}}{\rightarrow}}
\newcommand{\schw}{\stackrel{d}{\rightarrow}}
\numberwithin{equation}{section}
\long\def\symbolfootnote[#1]#2{\begingroup\def\thefootnote{\fnsymbol{footnote}}
\footnote[#1]{#2}\endgroup}
\journal{Stochastic Processes and Their Applications}
\begin{document}

\begin{frontmatter}



\title{High-frequency asymptotics for path-dependent functionals of It\^o semimartingales}

\author{Moritz Duembgen\fnref{label1}}
\ead{m.duembgen@statslab.cam.ac.uk}
\address{University of Cambridge}
\author{Mark Podolskij\fnref{label2}}
\ead{m.podolskij@uni-heidelberg.de}
\address{Heidelberg University and CREATES}

\fntext[label1]{The financial support by Man Group plc and the EPSRC is greatly appreciated.}
\fntext[label2]{The financial support from CREATES is greatly appreciated.}

\begin{abstract}
The estimation of local characteristics of  It\^o semimartingales  has received a great deal of attention in both academia and industry over the past decades. In various papers limit theorems were derived for functionals of increments and ranges in the infill
asymptotics setting. 
In this paper we establish the asymptotic theory for a wide class of statistics that are built from the incremental process
of an It\^o semimartingale. More specifically, we will show the law of large numbers and the associated stable central limit theorem
for the path dependent functionals in the continuous and discontinuous framework. Some examples from economics and physics demonstrate
the potential applicability of our theoretical results in practice.
\end{abstract}

\begin{keyword}
   high frequency data\sep limit theory  \sep semimartingales \sep stable convergence
\end{keyword}

\end{frontmatter}

\vspace{4pt}
\section{Introduction}

In the last decade limit theory for high frequency observations of It\^o semimartingales has received 
a lot of attention in the scientific literature. Such observation scheme of semimartingales, also called \textit{infill asymptotics}, naturally
appears in financial, biological and physical applications among many others. For instance, a seminal work of Delbaen and Schachermayer
\cite{ds94} states that price processes must follow a semimartingale model under nor arbitrage conditions. \\
A general It\^o semimartingale exhibits a representation of the form
\begin{align*}
X_t = X_0 + \int_0^t\!\mu_s\,\text{d}s + \int_0^t\!\sigma_{s}\,\text{d}W_s +  \delta \boldsymbol 1_{\{|\delta | \le 1\}} \star (m - n)_t +   \delta \boldsymbol 1_{\{|\delta | > 1\}} \star m_t, 
\end{align*} 
where $\mu$ represents the drift, $\sigma$ is the volatility, $W$ is a Brownian motion, $m$ denotes the jump measure
associated with $X$ and $n$ is its compensator. Furthermore, for any measure $\pi$, we use the short hand notation $f \star \pi:=
\int f d\pi$ whenever the latter is well defined. Irrespective of the application field, researchers are interested in understanding the
fine structure of the underlying It\^o semimartingale model based on high frequency observations
\[
X_{0},X_{\Delta_n}, X_{2\Delta_n}, \ldots, X_{\Delta_n\lfloor t/\Delta_n\rfloor},
\]    
where $\Delta_n\rightarrow 0$, which refers to infill asymptotics. For various testing and estimation problems, the class of generalised
multipower variations turned out to be a very important probabilistic tool. In their most general form, generalised
multipower variations are defined as
\[
\sum_{i=1}^{\lfloor t/\Delta_n\rfloor -d+1} f\Big(a_n (X_{i\Delta_n} - X_{(i-1)\Delta_n}), \ldots, 
a_n (X_{(i+d-1)\Delta_n} - X_{(i+d-2)\Delta_n}) \Big),
\]
where $f:\R^d \rightarrow \R$ is a smooth function and the scaling $a_n$ depends on whether the process $X$ has jumps or not.
In the continuous case the proper scaling is $a_n=\Delta_n^{-1/2}$. Probabilistic properties of generalised
multipower variations in continuous and discontinuous settings
have been studies in \cite{bgjps06,jacod08,kp08} among many others. We refer to a recent book \cite{jp12} for a comprehensive 
study of high frequency asymptotics for It\^o semimartingales. Such probabilistic results found manifold applications 
in the statistical analysis of semimartingale models. Estimation of the quadratic variation (see e.g. \cite{jacod08}),  
volatility forecasting (see e.g. \cite{abm04,abm06}), and tests for the presence of the jump component 
(see e.g. \cite{aj09,bs06}) are the most prominent applications among many others. \\
The aim of this paper is to study the asymptotic behaviour 
of \textit{path dependent} high frequency functionals of It\^o semimartingales. This framework is motivated by the fact
that in some situations we can not directly observe the semimartingale $X$, but only its path dependent functional over
short time windows. Let us give two examples. In various applied sciences integrated diffusions (i.e. integrated It\^o semimartingales)
appear as a natural class of models for  a given random phenomena.
For example in physics, when a medium's surface (such as the arctic's sea ice) is modelled as a stochastic process, a sonar's measurement of the reflection of this surface is given by the local time of the surface's slope process (see e.g. \cite{n88, f12}). Since this local time process is typically an It\^o process again (see e.g.  \cite{r63, k63}), limit theorems for local averages are required in order to make inference on the structure of the original surface process (see e.g. \cite{f13} for a detailed discussion).
Because only discrete (high frequency) observations of such integrated diffusions are available, one can not recover the original 
path of the underlying It\^o semimartingales from it. Another example of path dependent functionals are ranges whose statistical
properties have been studied in \cite{gk80, parkinson80} in the case of law frequency observations of a scaled Brownian motion.
We also refer to an early result by William Feller \cite{feller51}, which characterises the distribution of the range of 
the Brownian motion. \\ 
In this paper we will consider functionals of the incremental process built from $X$, i.e.  
\begin{align*}
V(X,g)_t^n=\Delta_n \sum_{i=1}^{\lfloor t/\Delta_n\rfloor} g\Big(\big\{a_n \big(X_{(i-1\text{+}s)\Delta_n}-X_{(i-1)\Delta_n}\big);\, s\in[0,1]\big\}\Big),
\end{align*}
where $g$ is now operating on $C([0,1])$ and the scaling $a_n$ equals $\Delta_n^{-1/2}$ when $X$ is a continuous It\^o semimartingale.
Obviously, this class of statistics extends the classical concept of power variations to path dependent functionals. The function
$g(x)=\sup_{t\in [0,1]} x(t) - \inf_{t\in [0,1]} x(t)$ recovers the case of realised ranges as the have been considered in 
\cite{cp07,kp12} in the context of quadratic variation estimation. In this work we will prove the law of large numbers for the functional $V(X,g)_t^n$ and show the associated stable central limit theorem in continuous and discontinuous framework.
We remark that extending the analysis to general path dependent functionals increases the complexity of the proofs, 
which is due to the topological structure of the  space $C([0,1])$. 
Furthermore, a general asymptotic statement in the discontinuous case seems to be out of reach (in contrast to very general
results for classical power variations studied in \cite{jacod08}). For this reason, we restrict our attention to range statistics
of discontinuous It\^o semimartingales, as they seem to be useful in financial applications (see \cite{kp12}).
Finally, we present some applications of the probabilistic
results, in particular in the context of integrated diffusions and realised ranges.

The paper is organised as follows. In Section 2 we state the two main theorems for general functionals of continuous It\^o semimartingales, establishing the limits in probability as well as the associated stable central limit theorem. 
In Section 3 we apply the limit theory to three most prominent practical examples including general range statistics
and integrated diffusions. Section 4 is devoted to the limit theorems for realised ranges of discontinuous It\^o semimartingales.
The proofs of the main results are collected in Section 5.\vspace{8pt}

\section{Limit Theorems for Continuous It\^o Semimartingales}\vspace{4pt}
Before we present the main results we start by introducing some notation. We denote by $C([0,1])$ the space of continuous real valued
functions on the interval $[0,1]$, and by $\|\cdot\|_{\infty}$ the supremum  norm on $C([0,1])$. A function $f:C([0,1])\rightarrow \R$
is said to have polynomial growth if $|f(x)|\leq C(1+\|x\|_{\infty}^p)$ for some $C,p>0$. For any $x,y\in C([0,1])$
and $f:C([0,1])\rightarrow \R$, the expression $f'_{y}(x)$ denotes the G\^ateaux derivative of $f$ at point $x$ in the direction
of $y$, i.e. $f'_y(x):= \lim_{h\rightarrow 0}(f(x+hy) -f(x))/h$. \newline \newline
For any processes $Y^n, Y$  we denote by $Y^n \ucp Y$ the uniform convergence in probability, i.e. 
$\sup_{t\in [0,T]}|Y^n_t - Y_t| \pn 0$ for all $T>0$. Throughout this paper we frequently use the notion of stable convergence, which is due to Renyi
\cite{renyi63}. A sequence of random variables  $(Y_n)_{n\geq 1}$ on  
 $(\Omega, \mathcal F, \mathbb P)$ with values in a Polish space $(E, \mathcal{E})$
is said to converge stably in law to $Y$ ($Y_n \stab Y$), where $Y$ is defined on an extension
$(\Omega', \mathcal F', \mathbb P')$ of the original probability space, 
if and only if for any bounded, continuous function $f$ and any bounded $\mathcal{F}$-measurable random variable $Z$ it holds that
\begin{align} \label{defstable}
\E[ f(Y_n) Z] \rightarrow \E'[ f(Y) Z], \quad n \rightarrow \infty.
\end{align} 
Typically, we will deal with spaces $E=\mathbb D([0,T] , \R)$ equipped with the  uniform topology when the process $Y$ is continuous. Notice that stable convergence is a stronger mode of convergence than weak convergence. In fact, the statement 
$Y_n \stab Y$ is equivalent to the joint weak convergence $(Y_n, Z) \schw (Y,Z)$ for any $\mathcal F$-measurable random variable
$Z$.   

\subsection{Law of Large Numbers}

Throughout this section we are considering a stochastic process $X$ defined on a filtered probability space $(\Omega, \mathcal F, \mathbb F\!=\!(\mathcal F_t)_{t\ge 0}, \mathbb P)$ satisfying the usual conditions that follows the distribution of a diffusion
\begin{align*}
	X_t=X_0 + \int_0^t \mu_s\, \text{d}s + \int_0^t \sigma_{s}\, \text{d}W_s
\end{align*}
for $t\ge 0$, where $X_0$ is a constant, $W$ is a Brownian motion, $\mu$ is a predictable, locally bounded process and $\sigma$ is an adapted, c\'adl\'ag process. Given a function $g:C([0, 1]) \rightarrow \mathbb R$ and a vanishing sequence $(\Delta_n)_{n\in \mathbb N}$ we define the sequence of processes
\begin{align} \label{statistic}
	V(X, g)^n_t &:= \Delta_n \sum_{i=1}^{\left\lfloor t/\Delta_n \right\rfloor} g\big(\Delta_n^{-\frac 1 2}\ d_i^n (X)\big), \\[1.5 ex]
	\label{din}
	d_i^n (X)&:= \big\{X_{(i-1+s)\Delta_n} - X_{(i-1)\Delta_n}\big\}_{s\in [0,1]}.
\end{align}
For any $z\in \R$ and $g\in C([0,1])$ we introduce the quantity
\begin{align} \label{rho}
\rho_z(g):=\mathbb E\left[g(\left\{z\, W_s;\ s\in[0, 1]\right\})\right],
\end{align}
whenever the latter expectation is finite.
Our first result is the law of large numbers for the functional $V(X, g)^n_t$. \vspace{4pt}
\begin{theor}[Law of Large Numbers]\label{HFTheor1}
Let $g$ be a locally uniformly continuous functional, i.e. for $x,y \in C([0,1])$,
	\begin{enumerate}   \setlength{\itemsep}{8pt} 
		\item[(i)] given $K,\epsilon>0$ there exists $ \delta>0$ such that for $\lVert x\rVert_\infty, \lVert y\rVert_\infty \le K$, $\lVert x-y\rVert_\infty\le \delta$ it follows that $|g(x)-g(y)| \le \epsilon$,
		\end{enumerate} 
and have polynomial growth. Then it holds that
	\begin{align}\label{LLN}
		V(X, g)^n_t \  \stackrel{ucp}{\rightarrow} \ V(X,g)_t := \int_0^t \rho_{\sigma_s}(g) \,\text{d}s,
	\end{align}
	where the quantity $\rho_z(g)$ is defined at \eqref{rho}.
\end{theor}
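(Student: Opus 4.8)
The plan is to follow the standard blocking strategy for high‑frequency limit theorems, adapted to the infinite‑dimensional state space $C([0,1])$, reducing the claim in several steps to a Riemann‑sum convergence. First I would invoke a localization argument to assume that $\mu$, $\sigma$ and $X$ itself are uniformly bounded: since $g$ has polynomial growth and, after localization, all block increments of $X$ over intervals of length $\Delta_n$ have moments of every order of the correct size $O(\Delta_n^{1/2})$, the truncation error is negligible in the usual way, and all $L^p$‑bounds used below then hold uniformly in $i$ and $n$. Next I would discard the drift: replacing $X$ by its continuous martingale part $X':=X_0+\int_0^\cdot\sigma_s\,\text{d}W_s$ perturbs each block increment by $\int_{(i-1)\Delta_n}^{(i-1+s)\Delta_n}\mu_u\,\text{d}u=O(\Delta_n)$ uniformly in $s$, hence by $O(\Delta_n^{1/2})$ in supremum norm after multiplication by $\Delta_n^{-1/2}$, which together with assumption (i) is negligible.

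The crucial step is to freeze the volatility on each block, replacing $\Delta_n^{-1/2}d_i^n(X')$ by $\beta_i^n:=\sigma_{(i-1)\Delta_n}\big\{\Delta_n^{-1/2}(W_{(i-1+s)\Delta_n}-W_{(i-1)\Delta_n})\big\}_{s\in[0,1]}$. Brownian self‑similarity shows that $\big\{\Delta_n^{-1/2}(W_{(i-1+s)\Delta_n}-W_{(i-1)\Delta_n})\big\}_{s\in[0,1]}$ is a standard Brownian path on $[0,1]$, independent of $\mathcal F_{(i-1)\Delta_n}$. The supremum norm of $\Delta_n^{-1/2}d_i^n(X')-\beta_i^n$ equals $\sup_{s\in[0,1]}\big|\Delta_n^{-1/2}\int_{(i-1)\Delta_n}^{(i-1+s)\Delta_n}(\sigma_u-\sigma_{(i-1)\Delta_n})\,\text{d}W_u\big|$, whose second moment is, by Doob's and the Burkholder--Davis--Gundy inequalities, at most $C\Delta_n^{-1}\int_{(i-1)\Delta_n}^{i\Delta_n}\E[(\sigma_u-\sigma_{(i-1)\Delta_n})^2]\,\text{d}u$; multiplying by $\Delta_n$ and summing yields $\E\big[\int_0^T(\sigma_u-\sigma_{\Delta_n\lfloor u/\Delta_n\rfloor})^2\,\text{d}u\big]$, which vanishes by dominated convergence since $\sigma$ is c\`adl\`ag and bounded. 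To convert this supremum‑norm smallness into smallness of $\Delta_n\sum_i\big|g(\Delta_n^{-1/2}d_i^n(X'))-g(\beta_i^n)\big|$ one cannot use uniform continuity alone; I would split according to whether both arguments lie in the supremum‑norm ball of radius $K$ and are within $\delta$ of each other — where (i) gives the bound $\epsilon$ — or not, bounding the complementary bad event by Chebyshev and the polynomial‑growth contribution there by Cauchy--Schwarz, using the uniform moment bounds from the first step. Letting $n\to\infty$, then $K\to\infty$, then $\epsilon\to0$ kills this term. As all error bounds here are monotone in $t$ or dominated by $t$‑uniform quantities, the convergences are in fact uniform on compacts in probability.

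It remains to treat $\Delta_n\sum_{i=1}^{\lfloor t/\Delta_n\rfloor}g(\beta_i^n)$. Writing $B_i^n$ for the rescaled Brownian increment, we have $g(\beta_i^n)=g(\sigma_{(i-1)\Delta_n}B_i^n)$, and by the independence and distributional identity above together with definition \eqref{rho}, $\E[g(\sigma_{(i-1)\Delta_n}B_i^n)\mid\mathcal F_{(i-1)\Delta_n}]=\rho_{\sigma_{(i-1)\Delta_n}}(g)$. Hence $\Delta_n\sum_i\big(g(\beta_i^n)-\rho_{\sigma_{(i-1)\Delta_n}}(g)\big)$ is, indexed by $k=\lfloor t/\Delta_n\rfloor$, the terminal value of a martingale with orthogonal increments of second moment $O(\Delta_n^2)$ uniformly (again by boundedness of $\sigma$ and polynomial growth of $g$); summing $O(\Delta_n^2)$ over $O(\Delta_n^{-1})$ terms and applying Doob's inequality gives $\ucp$ convergence of this term to $0$. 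Finally, $z\mapsto\rho_z(g)$ is continuous: if $z_n\to z$ then $\{z_nW_s\}\to\{zW_s\}$ uniformly a.s., $g$ is continuous (being uniformly continuous on balls by (i)), and polynomial growth plus uniform integrability yield $\rho_{z_n}(g)\to\rho_z(g)$. Thus $s\mapsto\rho_{\sigma_s}(g)$ is bounded and c\`adl\`ag, so its left‑endpoint Riemann sums $\Delta_n\sum_{i=1}^{\lfloor t/\Delta_n\rfloor}\rho_{\sigma_{(i-1)\Delta_n}}(g)$ converge to $\int_0^t\rho_{\sigma_s}(g)\,\text{d}s$ uniformly on $[0,T]$ a.s.\ by dominated convergence. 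Chaining the four reductions gives \eqref{LLN}.

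The main obstacle, as indicated, is the volatility‑freezing step: one has to pass from an $L^2$‑estimate for the supremum‑norm distance between the true rescaled block path and its Gaussian proxy to an estimate for the difference of the only locally uniformly continuous, polynomially growing functional $g$ evaluated at the two paths, with constants uniform across the $O(\Delta_n^{-1})$ blocks; the splitting into a "good" and a "bad" event, controlled respectively by (i) and by moments together with the growth bound, is the technical heart of the argument. Everything else is either classical localization or a routine martingale and Riemann‑sum argument.
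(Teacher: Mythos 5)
Your proposal is correct and follows essentially the same route as the paper: localize, approximate the rescaled block path by $\beta_i^n=\Delta_n^{-1/2}\sigma_{(i-1)\Delta_n}d_i^n(W)$ with an $L^2$ sup-norm bound from Burkholder--Davis--Gundy and the c\`adl\`ag property of $\sigma$, convert that into smallness of the $g$-difference by the good/bad event splitting using (i), Chebyshev and polynomial growth, kill the centered martingale sum by Doob, and identify the conditional expectations as $\rho_{\sigma_{(i-1)\Delta_n}}(g)$ whose Riemann sums converge. The only cosmetic difference is that you peel off the drift before freezing the volatility, whereas the paper absorbs both into a single approximation estimate; your explicit continuity argument for $z\mapsto\rho_z(g)$ is a detail the paper merely asserts.
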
\vspace{6pt}
\begin{remark}\emph{
Our notion of locally uniform continuity is slightly unusual. Instead of requiring uniform continuity on neighbourhoods or compact sets we demand it on balls $B_{\le K}(0)=\{x\in C([0,1]);\ \lVert x\rVert_\infty \le K\}$ for $K>0$, which are not compact with respect to the uniform topology. This type of locally uniform continuity is not required in the classical limit theory for functionals
of increments of $X$ (see e.g. \cite{bgjps06}) since on finite dimensional spaces continuity on closed balls implies uniform
continuity. We remark that our locally uniform continuity assumption is satisfied whenever
\[
|g(x)-g(y)|\leq C\|x-y\|_{\infty}^\delta 
\]
for all $x,y\in C[0,1]$ and some $C,\delta >0$. This condition is satisfied for all practical examples.
}
\end{remark}\vspace{6pt}

\subsection{Central Limit Theorem}
Having determined the limit in probability we now turn to the associated stable central limit theorem. \vspace{4pt}
\begin{theor}[Central Limit Theorem]\label{HFTheor2}
Let $g$ satisfy the conditions of Theorem \ref{HFTheor1}. Moreover, we assume that
	\begin{enumerate} \setlength{\itemsep}{8pt}
		\item[(ii)] 	given $K, \epsilon>0$ there exists $\delta>0$ such that for $\lVert x\rVert_\infty, \lVert y\rVert_\infty \le K$, $\lVert x-y\rVert_\infty \le \delta, \lVert v\rVert_\infty \le 1$ it follows that $|g'_v(x)-g'_v(y)| \le \epsilon$,
		\item[(iii)]	there exist $C, p>0$ such that $|g'_v(x)| \le C(1+ \lVert x\rVert_\infty^p)$ for $\lVert v\rVert_\infty \le 1$.
	\end{enumerate}
	Let $\sigma$ be a continuous It\^o semimartingale of the form
	\begin{align*}
		\sigma_t = \sigma_0 + \int_0^t \tilde{\mu}_s \,\emph{d}s + \int_0^t \tilde \sigma_{s} \,\emph{d}W_s + \int_0^t \tilde v_{s}\, \emph{d}V_s, 
	\end{align*}
where $\tilde \mu,\ \tilde \sigma$ and $\tilde v$ are adapted, c\'adl\'ag processes and $V$ is another Brownian motion
independent of $W$.		
	Then it follows that
	\begin{align}\label{CLT}
		\Delta_n^{-\frac 1 2} \left(V(X, g)^n - V(X, g) \right) \ \stab \ U(X, g)
	\end{align}
	where  $U(X, g)_t := \int_0^t u^1_s \,\emph{d}s + \int_0^t u^2_s\,\emph{d}W_s + \int_0^t u^3_s \,\emph{d}W'_s$ with
	\begin{align*}
		u^1_s &:= \mu_s \rho_{\sigma_s}^{(2)}(g') + \frac 1 2 \tilde \sigma_s \rho_{\sigma_s}^{(3)}(g') - \frac 1 2 \tilde \sigma_s \rho_{\sigma_s}^{(2)}(g')\\
		u^2_s &:= \rho_{\sigma_s}^{(1)}(g), \\
		u^3_s &:= \sqrt{\rho_{\sigma_s}(g^2)-\rho^2_{\sigma_s}(g)-(\rho_{\sigma_s}^{(1)}(g))^2},
	\end{align*}
	and, for $z\in \R$ and $f(x,y):=g'_{y}(x)$,
	\begin{align*}
	 	\rho_z^{(1)}(g)&:=\mathbb E\Big[g\big(\left\{z\, W_s;\ s\in [0, 1]\right\}\big)\, W_1\Big], \\ 
	 	\rho_z^{(2)}(f) &:=\mathbb E\Big[f\big(\left\{z\, W_s;\ s\in [0, 1]\right\},\, \{s;\ s\in [0,1]\}\big)\Big], \\
	 	\rho_z^{(3)}(f) &:=\mathbb E\Big[f\big(\left\{z\, W_s;\ s\in [0, 1]\right\},\, \{W_s^2;\ s\in [0,1]\}\big)\Big].
	\end{align*}
Furthermore, $W'$ is a Brownian motion defined on an extension of $(\Omega, \mathcal F, \mathbb F, \mathbb P)$,
which is independent of $\mathcal F$.
\end{theor}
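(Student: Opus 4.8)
The plan is to combine a first-order Taylor expansion of $g$ around a frozen-volatility Gaussian reference block with a stable limit theorem for triangular martingale difference arrays. By the usual localization procedure (see \cite{jp12}) I may assume that $\mu,\sigma,\tilde\mu,\tilde\sigma$ and $\tilde v$ are uniformly bounded; since both sides of \eqref{CLT} are local, no generality is lost. For $i\ge 1$ set $\beta_i^n:=\{\Delta_n^{-1/2}(W_{(i-1+s)\Delta_n}-W_{(i-1)\Delta_n})\}_{s\in[0,1]}$, which is independent of $\mathcal F_{(i-1)\Delta_n}$ and has the law of $\{W_s\}_{s\in[0,1]}$, and put $\alpha_i^n:=\sigma_{(i-1)\Delta_n}\beta_i^n$ and $\gamma_i^n:=\Delta_n^{-1/2}d_i^n(X)-\alpha_i^n$. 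Expressing $\gamma_i^n$ through the scaled drift increment and $\Delta_n^{-1/2}\int_{(i-1)\Delta_n}^{(i-1+\cdot)\Delta_n}(\sigma_r-\sigma_{(i-1)\Delta_n})\,\text{d}W_r$, the boundedness assumptions and the Burkholder--Davis--Gundy inequality give $\E[\lVert\gamma_i^n\rVert_\infty^q\mid\mathcal F_{(i-1)\Delta_n}]=O(\Delta_n^{q/2})$ for all $q\ge 1$. I then decompose $\Delta_n^{-1/2}(V(X,g)^n-V(X,g))=A^n+B^n$ with
\begin{align*}
A_t^n:=\Delta_n^{1/2}\sum_{i=1}^{\lfloor t/\Delta_n\rfloor}\bigl(g(\alpha_i^n)-\rho_{\sigma_{(i-1)\Delta_n}}(g)\bigr),
\end{align*}
and $B^n$ gathering the three remaining pieces, namely $\Delta_n^{1/2}\sum_i g'_{\gamma_i^n}(\alpha_i^n)$, the second-order Taylor remainders, and $\Delta_n^{-1/2}(\Delta_n\sum_i\rho_{\sigma_{(i-1)\Delta_n}}(g)-\int_0^t\rho_{\sigma_s}(g)\,\text{d}s)$. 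The structural fact driving everything is that $\E[g(\alpha_i^n)\mid\mathcal F_{(i-1)\Delta_n}]=\rho_{\sigma_{(i-1)\Delta_n}}(g)$ \emph{exactly}, so $A^n$ is a martingale difference array while $B^n$ carries all the bias.

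For $B^n$ I would show $B^n\ucp\int_0^\cdot u^1_s\,\text{d}s$. The Taylor remainder is handled by the fundamental theorem of calculus, writing $g(\alpha+\gamma)-g(\alpha)-g'_\gamma(\alpha)=\lVert\gamma\rVert_\infty\int_0^1(g'_{\gamma/\lVert\gamma\rVert_\infty}(\alpha+t\gamma)-g'_{\gamma/\lVert\gamma\rVert_\infty}(\alpha))\,\text{d}t$, which is $\le\epsilon\lVert\gamma_i^n\rVert_\infty$ on $\{\lVert\alpha_i^n\rVert_\infty\le K,\ \lVert\gamma_i^n\rVert_\infty\le\delta\}$ by (ii) and, using (iii) and $\lVert\gamma_i^n\rVert_\infty=O_{L^q}(\Delta_n^{1/2})$ on the complement, contributes $o_{\mathbb P}(\Delta_n^{1/2})$ after summation. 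For the Riemann-sum piece I would first check, by differentiating under the expectation with (ii)--(iii) providing the dominating functions, that $F(z):=\rho_z(g)$ is $C^1$ with $F'(z)=\E[g'_{\{W_s\}}(\{zW_s\})]$; together with the semimartingale dynamics of $\sigma$ this gives $F(\sigma_r)-F(\sigma_{(i-1)\Delta_n})=F'(\sigma_{(i-1)\Delta_n})(\sigma_r-\sigma_{(i-1)\Delta_n})+o(|\sigma_r-\sigma_{(i-1)\Delta_n}|)$, and integrating over each block and summing leaves a quantity of order $O_{\mathbb P}(\Delta_n)$, negligible after the scaling. For $\Delta_n^{1/2}\sum_i g'_{\gamma_i^n}(\alpha_i^n)$ I would expand $\gamma_i^n=\Delta_n^{1/2}(\mu_{(i-1)\Delta_n}\{s\}_{s}+\tfrac12\tilde\sigma_{(i-1)\Delta_n}\{\beta_i^n(s)^2-s\}_{s}+\tilde v_{(i-1)\Delta_n}\{\cdots\})$ up to lower-order terms (the $\tfrac12(\beta^2-s)$ term coming from the It\^o correction $\int(W-W_{(i-1)\Delta_n})\,\text{d}W$), split the directional derivative accordingly, and take conditional expectations: the definitions of $\rho_z^{(2)}(g')$ and $\rho_z^{(3)}(g')$ yield exactly $\mu_s\rho_{\sigma_s}^{(2)}(g')+\tfrac12\tilde\sigma_s(\rho_{\sigma_s}^{(3)}(g')-\rho_{\sigma_s}^{(2)}(g'))=u^1_s$, while the $\tilde vV$-term drops out because $V\perp W$. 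The centered part of this sum is a martingale array of order $O_{L^2}(\Delta_n^{1/2})$, hence negligible.

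For $A^n$ I apply the stable limit theorem for triangular martingale difference arrays (see \cite{jp12}). With $\chi_i^n:=\Delta_n^{1/2}(g(\alpha_i^n)-\rho_{\sigma_{(i-1)\Delta_n}}(g))$ the conditions reduce to: $\sum_i\E[(\chi_i^n)^2\mid\mathcal F_{(i-1)\Delta_n}]\ucp\int_0^\cdot(\rho_{\sigma_s}(g^2)-\rho_{\sigma_s}^2(g))\,\text{d}s$, which follows from $\E[g(\alpha_i^n)^2\mid\mathcal F_{(i-1)\Delta_n}]=\rho_{\sigma_{(i-1)\Delta_n}}(g^2)$ and Riemann-sum convergence; the Lyapunov condition $\sum_i\E[(\chi_i^n)^4\mid\mathcal F_{(i-1)\Delta_n}]=O(\Delta_n)\to0$; the bracket condition $\sum_i\E[\chi_i^n(W_{i\Delta_n}-W_{(i-1)\Delta_n})\mid\mathcal F_{(i-1)\Delta_n}]\ucp\int_0^\cdot\rho_{\sigma_s}^{(1)}(g)\,\text{d}s$, which follows from $\E[g(\alpha_i^n)\beta_i^n(1)\mid\mathcal F_{(i-1)\Delta_n}]=\rho_{\sigma_{(i-1)\Delta_n}}^{(1)}(g)$; and $\sum_i\E[\chi_i^n(N_{i\Delta_n}-N_{(i-1)\Delta_n})\mid\mathcal F_{(i-1)\Delta_n}]\ucp0$ for $N=V$ and for any bounded $\mathcal F$-martingale orthogonal to $W$, which holds because $g(\alpha_i^n)$ depends only on the $W$-increments over block $i$. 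Since $(\rho_{\sigma_s}^{(1)}(g))^2+(u^3_s)^2=\rho_{\sigma_s}(g^2)-\rho_{\sigma_s}^2(g)$, the theorem gives $A^n\stab\int_0^\cdot u^2_s\,\text{d}W_s+\int_0^\cdot u^3_s\,\text{d}W'_s$ with $W'$ a Brownian motion on an extension, independent of $\mathcal F$. As $B^n$ converges uniformly on compacts in probability to the $\mathcal F$-measurable limit $\int_0^\cdot u^1_s\,\text{d}s$, this convergence is joint with that of $A^n$ in the stable sense, and adding the limits yields \eqref{CLT}.

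The hard part is the perturbation analysis on the non-locally-compact space $C([0,1])$: one must push the first-order expansion of $g$ through with a remainder that is genuinely $o_{\mathbb P}(\Delta_n^{1/2})$ after summation, using only the restricted regularity in (ii)--(iii). This is precisely where uniform continuity on balls rather than on compacts is what one needs, since $\lVert\alpha_i^n\rVert_\infty$ and $\lVert\gamma_i^n\rVert_\infty$ are controlled only in $L^q$ and not pathwise; a further point requiring care is that the splitting of $g'_{\gamma_i^n}(\alpha_i^n)$ across the three components of $\gamma_i^n$ uses linearity of $v\mapsto g'_v(x)$. The remaining ingredients --- localization, the block-wise It\^o expansions, and the martingale array CLT --- are by now standard, but bookkeeping the three sources of bias (the drift $\mu$, the It\^o correction involving $\tilde\sigma$, and the vanishing $\tilde vV$-term) and showing that the Riemann-sum discretization of $\int_0^\cdot\rho_{\sigma_s}(g)\,\text{d}s$ is negligible at order $\Delta_n^{1/2}$, which rests on the $C^1$-regularity of $z\mapsto\rho_z(g)$, both need to be carried out carefully.
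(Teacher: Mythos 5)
Your proposal is correct and follows essentially the same route as the paper: localization, the frozen-coefficient Gaussian block $\sigma_{(i-1)\Delta_n}\Delta_n^{-1/2}d_i^n(W)$, Jacod's stable limit theorem for the resulting martingale difference array (with the same conditional variance, $W$-bracket, Lyapunov and orthogonality conditions), a first-order G\^ateaux/It\^o expansion of the bias producing $u^1$, and $C^1$-regularity of $z\mapsto\rho_z(g)$ for the discretization error. The only step stated too quickly is the Riemann-sum error: each block term $\Delta_n^{-1/2}\int_{(i-1)\Delta_n}^{i\Delta_n}\bigl(\rho_{\sigma_{(i-1)\Delta_n}}(g)-\rho_{\sigma_s}(g)\bigr)\,\text{d}s$ is only $O_{L^2}(\Delta_n)$, so a triangle-inequality bound over the $\lfloor t/\Delta_n\rfloor$ blocks is not sufficient, and one must exploit the martingale cancellation coming from the semimartingale structure of $\sigma$ (the paper splits this term into a drift part and a martingale part controlled by Doob's inequality), which your sketch alludes to but does not carry out.
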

Some remarks on the application of this probabilistic result are in order.
\begin{remark}\emph{
When $g(x)\equiv f(x(1))$ for some function $f:\R\rightarrow \R$ such that $f,f'$ have polynomial growth, we recover 
the stable central limit theorem for functionals of increments of $X$. More precisely, it holds that
\[
\rho_z^{(1)}(g) = \E[f(zW_1) W_1], \quad \rho_z^{(2)}(g') = \E[f'(zW_1)], \qquad  \rho_z^{(3)}(g') = \E[f'(zW_1) W_1^2],
\]
and we obtain the one-dimensional analogue of the asymptotic theory presented in \cite{kp08}.
}
\end{remark}

\begin{remark} \label{rem2}\emph{
In general, Theorem \ref{HFTheor2} can not be applied for statistical inference, since the distribution of the limit
$U(X, g)_t$ is unknown. However, when $g$ is an even  functional, i.e. $g(x)=-g(x)$ for all $x\in C([0,1])$, things become different.
In this case it holds that 
\[
\rho_z^{(1)}(g) = \rho_z^{(2)}(g') =\rho_z^{(3)}(g') =0
\]  
for all $z\in \R$, since $W\stackrel{d}= -W$ and expectations of odd functionals of $W$ are $0$. Hence, the limiting process 
$U(X, g)$ has the form
\[
U(X, g)_t = \int_0^t \sqrt{\rho_{\sigma_s}(g^2)-\rho^2_{\sigma_s}(g)} \,\emph{d}W'_s, 
\] 
which is, conditionally on $\mathcal F$, a Gaussian martingale with mean $0$. For a fixed $t>0$, the result of 
Theorem \ref{HFTheor2} can be transformed into a standard central limit theorem when $g$ is even. A slight modification 
of Theorem \ref{HFTheor1} shows that 
\begin{align*}
V^n_t &:= \Delta_n \sum_{i=1}^{\left\lfloor t/\Delta_n \right\rfloor -1} \Big\{ g^2\big(\Delta_n^{-\frac 1 2}\ d_i^n (X)\big)
- g\big(\Delta_n^{-\frac 1 2}\ d_i^n (X)\big) g\big(\Delta_n^{-\frac 1 2}\ d_{i+1}^n (X)\big) \Big\} \\
&\ucp \int_0^t \rho_{\sigma_s}(g^2)-\rho^2_{\sigma_s}(g) \,\emph{d}s.
\end{align*}
(This should be compared with the asymptotic theory for bipower variation established in \cite{bgjps06}.) For any fixed $t>0$,
we then deduce a standard central limit theorem
\[
\frac{\Delta_n^{-\frac 1 2} \left(V(X, g)^n_t - V(X, g)_t \right)}{\sqrt{V^n_t}} \schw \mathcal N(0,1)
\]
by  properties of stable convergence. The latter can be used to obtain confidence regions for the quantity $V(X, g)_t$. 
}
\end{remark}

\section{Examples and Applications}\vspace{4pt}
In this section we present some examples that demonstrate the applicability of the limit theory for path dependent functionals
of continuous It\^o semimartingales. For comparison reasons we start with the classical results on power variations.

\paragraph{Example 1}
Here we consider the power variation case which corresponds to $g(x)\equiv f(x(1))$ with $f(x)=|x|^p$, $p>0$. Recalling the asymptotic 
theory from \cite{bgjps06} we conclude that  
\begin{align*}
\Delta_n^{1-\frac p 2}\sum_{i=1}^{\lfloor t/\Delta_n \rfloor} \big|X_{i\Delta_n}-X_{(i-1)\Delta_n}\big|^p
\ucp \lambda^{1, p} \int_0^t |\sigma_s|^p \,\text{d}s
\end{align*}
where $\lambda^{1, p}=\mathbb E[|W_1|^p]$. Moreover, the following stable central limit theorem holds 
\begin{align*}
		\Delta_n^{-\frac 1 2} \Bigg( \frac{\Delta_n^{1-\frac p 2}}{\lambda^{1, p}}  \sum_{i=0}^{\lfloor t/\Delta_n \rfloor} \big|X_{i\Delta_n}-X_{(i-1)\Delta_n}\big|^p -  \int_0^t\! |\sigma_s|^p\, \text{d}s\Bigg)
 \stab \sqrt{\Lambda^{1, p}} \int_0^t |\sigma_s|^p\, \text{d}W'_s,
\end{align*}
where $\Lambda^{1, p}:= \frac{\lambda^{1, 2p}-(\lambda^{1, p})^2}{(\lambda^{1, p})^2}$. 
Later on we will compare the efficiency of power variation with other estimators presented in the following examples.
\vspace{6pt}

\paragraph{Example 2}

Let $g:C([0,1])\rightarrow \mathbb R$ be defined as $g(x):=f(\int_0^1 x(s)\,\text{d}s)$ for a continuously
differentiable function  $f:\mathbb R \rightarrow \mathbb R$ such that $f,f'$ have polynomial growth. 
Then condition (i) of Theorem \ref{HFTheor1} is obviously satisfied. Furthermore, it holds that
\[
g'_y(x)= f'\left(\int_0^1\!x(s)\,\text{d}s \right) \int_0^1\! y(s)\,\text{d}s, \qquad \forall x,y\in C([0,1]),
\]
and conditions (ii) and (iii) of Theorem \ref{HFTheor2} are fulfilled since $f'$ is continuous and has polynomial growth. 
In particular, for $f(x)=|x|^p$ with $p>0$ we obtain that 
\begin{align*}
	\Delta_n^{1-\frac p 2}\sum_{i=1}^{\lfloor t/\Delta_n \rfloor} \Big|\Delta_n^{-1}\int_{(i-1)\Delta_n}^{i\Delta_n}\! X_s \,\text{d}s-X_{(i-1)\Delta_n}\Big|^p\quad \ucp \quad \lambda^{2, p} \int_0^t |\sigma_s|^p \,\text{d}s
\end{align*}
where $\lambda^{2, p}=\mathbb E[|\int_0^1 W_s\, \text{d}s|^p]$. Furthermore, for $p>1$  we deduce the corresponding stable central limit
theorem (cf. Remark \ref{rem2})
\begin{align*}
		\Delta_n^{-\frac 1 2} \Bigg( \frac{\Delta_n^{1-\frac p 2}}{\lambda^{2, p}} &\sum_{i=0}^{\lfloor t/\Delta_n \rfloor} \Big|\Delta_n^{-1}\int_{(i-1)\Delta_n}^{i \Delta_n}\!X_s\, \text{d}s -X_{(i-1)\Delta_n}\Big|^p - \int_0^t\! |\sigma_s|^p\, \text{d}s\Bigg) \\
		 &\stab \quad \sqrt{\Lambda^{2, p}} \int_0^t |\sigma_s|^p\, \text{d}W'_s,
\end{align*}
with $\Lambda^{2, p}:= \frac{\lambda^{2, 2p}-(\lambda^{2, p})^2}{(\lambda^{2, p})^2}$. \vspace{6pt}

\paragraph{Example 3}

Let us now consider the range-based functionals which has been originally studied in \cite{cp07}. 
Here the functional $g:C([0,1])\rightarrow \mathbb R$ is a function of the range, i.e. 
$g(x)=f(\sup_{t\in [0,\,1]}x(t) - \inf_{t\in [0,\,1]} x(t))$ for a continuously differentiable function 
$f:\mathbb R \rightarrow \mathbb R$, such that $f,f'$ have  polynomial growth. Then the law of large numbers
in Theorem \ref{HFTheor1} readily applies, but the central limit theorem cannot be directly deduced from Theorem
\ref{HFTheor2}, because the range is not G\^ateaux differentiable in general.  

However, we may apply the following result: Let $x,y\in C([0,1])$ be functions such that the set $M:=\{t\in [0,1]:~ t=\text{argmax}_{s\in [0,1]} x(s)\}$ is finite, then it holds that (cf. \cite{cp07})
\begin{align*}
	\frac 1 h \Big(\sup_{0\le s \le 1} \big(x(s)+ h y(s)\big) - \sup_{0\le s \le 1} x(s)\Big)= \max_{t\in M} y(t).
\end{align*}
In the proofs (see again \cite{cp07}) the function $x$ plays the role of the Brownian motion, which attains its maximum
(resp. minimum) at a unique point almost surely. Let $t_{max}:= \arg\max_{s\in [0,1]} W_s$ and $t_{min}:= \arg\min_{s\in [0,1]} W_s$.
Then the assertion of Theorem \ref{HFTheor2} remains valid 
 in the range case when $\sigma$ is everywhere invertible (cf. \cite{kp12}) with
\begin{align*}
 	\rho_x^{(1)}(g)	&=\mathbb E\Big[f\Big(x\, \Big(\sup_{0\le t \le 1}W_s - \inf_{0\le s \le 1} W_s\Big)\Big)\, W_1\Big], \\ 
 	\rho_x^{(2)}(g')	&=\mathbb E\Big[f'\Big(x\, \Big(\sup_{0\le t \le 1}W_s - \inf_{0\le s \le 1} W_s\Big)\Big)
\big(t_{max}-t_{min}\big)\Big], \\
 	\rho_x^{(3)}(g')	&=\mathbb E\Big[f'\Big(x\, \Big(\sup_{0\le t \le 1}W_s - \inf_{0\le s \le 1} W_s\Big)\Big)\big(W_{t_{max}}^2-W_{t_{min}}^2 \big)\Big],
\end{align*}
which extends the asymptotic theory presented in \cite{kp12} to general functions of the range. 
In particular, for $f(x)=|x|^p$ with $p>0$ we obtain that
\begin{align}
	\Delta_n^{1-\frac p 2}\sum_{i=1}^{\lfloor t/\Delta_n \rfloor} \sup_{s,u\in [(i-1)\Delta_n,\, i \Delta_n]} (X_s -X_u)^p  \quad \ucp\quad \lambda^{3, p}\ \int_0^t |\sigma_s|^p \,\text{d}s
\end{align}
where $\lambda^{3, p}=\mathbb E[\sup_{s,u\in [0,1]} (W_s -W_u)^p]$. Furthermore, since the function $f$
is even, we deduce the following central limit theorem 
\begin{align*}
		\Delta_n^{-\frac 1 2} \Bigg( \frac{\Delta_n^{1-\frac p 2}}{\lambda^{3, p}} &\sum_{i=0}^{\lfloor t/\Delta_n \rfloor} 
\sup_{s,u\in [(i-1)\Delta_n,\, i \Delta_n]} (X_s -X_u)^p - \int_0^t\! |\sigma_s|^p\, \text{d}s\Bigg) \\
		&\stab \quad \sqrt{\Lambda^{3, p}} \int_0^t |\sigma_s|^p\, dW'_s 
\end{align*}
where $\Lambda^{3, p}:= \frac{\lambda^{3, 2p}-(\lambda^{3, p})^2}{(\lambda^{3, p})^2}$. 
This recovers  the analysis presented in \cite{kp12}.\vspace{6pt}

\paragraph{Comparison of Examples 1-3}
When comparing different estimators of integrated powers of volatility presented in the previous examples,
we see that $\Lambda^{i, p},\ i=1,2,3$ serve as a convenient measure of their efficiency. We remark however
that this comparison is not fair as the sampling schemes of Example 1 and Examples 2-3 are not comparable. \newline

Since $\int_0^1 W_s \, \text d s \sim \mathcal N(0, 1/3)$, it follows that $$\lambda^{1, p} = 3^{p/2}\, \lambda^{2,p}$$ and $\Lambda^{1, p}$ coincides with $\Lambda^{2, p}$. However, $\Lambda^{3, p}$ is considerably smaller so as expected, range based estimation is asymptotically superior. For example in the case $p=2$ we find $\Lambda^{1, p},\Lambda^{2, p}= 2$, whereas $\Lambda^{3, p}\approx 0.4$. The smaller $p$ is, the more pronounced this relative difference becomes. Figure 1 illustrates these relationships.
\begin{figure}[!h]
	\centering
		\includegraphics[width=1\textwidth]{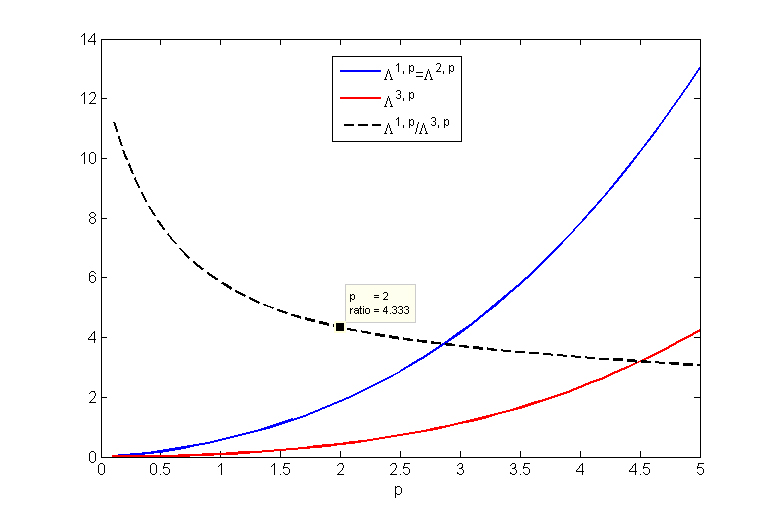}
		\caption{The parameters $\Lambda^{1, p}$=$\Lambda^{2,p}$, $\Lambda^{3,p}$ and their ratio.}
\end{figure}\vspace{6pt}

\paragraph{Example 4}
In various applied sciences integrated diffusions appear as a natural model of a random phenomena. For example in physics, when a medium's surface (such as the arctic's sea ice) is modelled as a stochastic process, a sonar's measurement of the reflection of this surface is given by the local time of the surface's slope process (see e.g. \cite{n88, f12}). Since this local time process is typically an It\^o process again (see e.g.  \cite{r63, k63}) and since the observations are given as local averages, limit theorems for local averages are required in order to make inference on the structure of the original surface process (see e.g. \cite{f13}).
So let's define the local averages of an It\^o process $X$ as 
\begin{align*}
	\overline{X}_i^n := \frac {1}{\Delta_n}\int_{(i-1)\Delta_n}^{i\Delta_n} X_s ds.
\end{align*}
A natural candidate estimator for the quadratic variation of $X$ is given by
\begin{align*}
	\sum_{i=2}^{\lfloor t/\Delta_n \rfloor} \left( \overline{X}_i^n - \overline{X}_{i-1}^n\right)^2.
\end{align*}
We note that this estimator does not directly exhibit a representation as in Example 2. However, when we use the decomposition
\begin{align*}
\overline{X}_i^n - \overline{X}_{i-1}^n &= \frac 1{\Delta_n} \Big( 
\int_{(i-1)\Delta_n}^{i\Delta_n} X_s - X_{(i-1)\Delta_n} ds - \int_{(i-2)\Delta_n}^{(i-1)\Delta_n} X_s - X_{(i-2)\Delta_n} ds  \\
&+(X_{(i-1)\Delta_n} - X_{(i-2)\Delta_n})\Big), 
\end{align*}
Theorem  \ref{HFTheor1}, and the bipower concept of Remark \ref{rem2}, we deduce the ucp convergence
\begin{align*}
\sum_{i=2}^{\lfloor t/\Delta_n \rfloor} \left( \overline{X}_i^n - \overline{X}_{i-1}^n\right)^2 
\ucp \frac 2 3 \int_0^t \sigma_s^2 ds.
\end{align*}
This clearly provides a way of estimating the quadratic variation of $X$ from observations of an integrated diffusion.

\section{Limit Theorems for It\^o Semimartingales with Jumps}

In this section we study the behavior of certain path-dependent functionals of discontinuous It\^o semimartingales. 
As the general theory is much more difficult to establish compared to the work of \cite{jacod08}, we restrict our attention
to ranges of It\^o semimartingales with jumps. For simplicity of exposition, we will further restrict ourselves to 
finite activity jump processes.

\subsection{Law of Large Numbers}

Consider now a stochastic process $X$ defined on a filtered probability space $(\Omega, \mathcal F, \mathbb F\!=\!(\mathcal F_t)_{t\ge 0}, \mathbb P)$ satisfying the usual conditions that follows the distribution of a diffusion with a jump component in the form of a compound Poisson process $Z_t=\sum_{i=1}^{N_t} J_i$ where $N$ is a Poisson process with intensity $\lambda$ and
i.i.d. jump sizes $J_i $, i.e.
\begin{align*}
	X_t=X_0 + \int_0^t \mu_s\, \text{d}s + \int_0^t \sigma_{s}\, \text{d}W_s + Z_t,
\end{align*}
where $W$ is a Brownian motion independent of $N$, $\mu$ is a predictable, locally bounded process and $\sigma$ is an adapted, c\'adl\'ag process. For a positive exponent $p>0$ we define
\begin{align*}
	R(X, p)^n_t := \sum_{i=1}^{\left\lfloor t/\Delta_n \right\rfloor}  \sup_{s,u\in [(i\!-\!1)\Delta_n, i\Delta_n]} |X_s - X_u|^p 
\end{align*}
for $t\ge 0,\ n\in \mathbb N$. 
Our first result is the following law of large numbers. 
\begin{theor}\label{HFTheor3}
	We have that
	\begin{align}\label{LLN2}
		R(X, p)^n_t \pn R(X, p)_t:=\begin{cases} 	
															 \lambda^{3,2} \int_0^t \sigma_s^2 \,\text{d}s + \sum_{i=1}^{N_t} J_i^2 & p = 2 \\ 
															 \sum_{i=1}^{N_t} |J_i|^p & p>2\end{cases}
	\end{align}
	where $\lambda^{3, 2}=\mathbb E[\sup_{s,u\in [0,\,1]} |W_s -W_u|^2]$.
\end{theor}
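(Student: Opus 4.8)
\textbf{Proof plan for Theorem \ref{HFTheor3}.}

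The plan is to localise the problem around the jump times of the compound Poisson process $Z$, separating those increment blocks $[(i-1)\Delta_n, i\Delta_n]$ that contain a jump from those that do not. Since $N$ has finite activity, on any finite time horizon $[0,t]$ there are almost surely only finitely many jumps, say at times $\tau_1, \ldots, \tau_{N_t}$, and for $n$ large enough each of these falls into a distinct block, with all other blocks being jump-free. I would write $R(X,p)^n_t = R^{(c)}_n + R^{(j)}_n$, where $R^{(c)}_n$ collects the jump-free blocks and $R^{(j)}_n$ the (finitely many) blocks containing a jump. A standard localisation argument (replacing $\mu$, $\sigma$ by bounded processes via stopping times, as in \cite{jp12}) allows us to assume $\mu$, $\sigma$ are bounded.

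For the jump-free blocks, on each such block $X$ coincides with the continuous It\^o semimartingale $X_0 + \int \mu\,\mathrm ds + \int \sigma\,\mathrm dW$, so $R^{(c)}_n$ is exactly the range functional $R(X^c,p)^n_t$ minus the contributions of the jump blocks. When $p=2$, Theorem \ref{HFTheor1} (applied with $g(x) = (\sup_{s}x(s) - \inf_{u}x(u))^2$, which is locally uniformly continuous with polynomial growth, indeed Lipschitz of exponent $\delta=1$ after the polynomial growth bound) combined with $\Delta_n\sum \to \int$ gives $\Delta_n \cdot (\text{scaling})$; more precisely, since the continuous-case scaling is $\Delta_n^{-1/2}$ inside $g$ and $g$ is $2$-homogeneous, each jump-free block contributes $\Delta_n \rho_{\sigma_s}(g) = \Delta_n \lambda^{3,2}\sigma_s^2$ in the limit, so $R^{(c)}_n \pn \lambda^{3,2}\int_0^t \sigma_s^2\,\mathrm ds$; removing finitely many blocks does not affect this limit. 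When $p>2$, the same functional $g$ is still $p$-homogeneous, so each jump-free block contributes $\Delta_n^{p/2}\rho_{\sigma_s}(g)$, i.e. $R^{(c)}_n = \Delta_n^{p/2-1}\cdot\Delta_n\sum(\ldots)$, which tends to $0$ since $p/2-1>0$; here I would invoke the law of large numbers (or even just the polynomial growth bound together with boundedness of $\sigma$ and a crude moment estimate on the Brownian range) to control $R^{(c)}_n$.

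For the jump blocks, fix a jump time $\tau = \tau_k$ with jump size $J_k$, falling in block $I_n = [(i_n-1)\Delta_n, i_n\Delta_n]$. On $I_n$ we have $X_s = X^c_s + J_k\mathbf 1_{\{s \ge \tau\}}$, and the range over $I_n$ satisfies $\sup_{s,u\in I_n}|X_s - X_u| = |J_k| + O(\operatorname{osc}(X^c, I_n))$, where the oscillation of the continuous part over a block of length $\Delta_n$ tends to $0$ (uniformly, by continuity of $X^c$ and compactness of $[0,t]$, with a rate $\Delta_n^{1/2-\epsilon}$). Hence the contribution of block $I_n$ converges to $|J_k|^p$, and summing over the finitely many jumps gives $\sum_{k=1}^{N_t}|J_k|^p$. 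Combining the two parts yields \eqref{LLN2} for both regimes.

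\textbf{Main obstacle.} The routine part is the jump-free contribution via Theorem \ref{HFTheor1}; the delicate point is making the decomposition rigorous at the level of \emph{convergence in probability} rather than along subsequences: one must argue that, with probability tending to one, no block contains two jumps and each jump is "well inside" its block (away from the endpoints by at least $\Delta_n^{1/2+\epsilon}$, say), so that the cross terms between $J_k$ and the Brownian oscillation are genuinely negligible and there is no ambiguity in assigning jumps to blocks. This requires a uniform control of the modulus of continuity of the continuous part together with the elementary fact that the finitely many jump times are, almost surely, not dyadic-type accumulation points of the grid; handling the boundary effect (a jump landing within $o(\Delta_n)$ of a grid point, so that effectively the jump is split between two blocks) is where most of the care goes, and is dealt with by noting that the probability of a jump falling in a $\Delta_n$-neighbourhood of the grid endpoints within $[0,t]$ tends to zero.
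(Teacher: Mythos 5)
Your decomposition into jump blocks and jump-free blocks — with a Burkholder--Davis--Gundy/moment bound of order $\Delta_n^{p/2-1}$ killing the continuous contribution for $p>2$, and the expansion $\sup_{s,u\in I_i^n}|X_s-X_u| = |J_k| + O(\operatorname{osc}(X^c,I_i^n))$ on the finitely many jump blocks — is essentially the paper's own proof (which formalises the last step via the mean value theorem); for $p=2$ the paper simply cites \cite{kp12}, whereas your derivation via Theorem \ref{HFTheor1} applied to $g(x)=(\sup_s x(s)-\inf_s x(s))^2$ is the same argument made explicit and is perfectly adequate. The boundary issue you single out as the main obstacle is in fact not needed for the law of large numbers: the position of the jump inside its block is irrelevant here (it only enters the central limit theorem through the fractional location $\kappa_i$), and it suffices that with probability tending to one no block contains two jumps.
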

For $p<2$ we obtain infinity in the limit whenever $\int_0^t \sigma_s^2\, \text{d}s>0$. We remark that the first convergence
of Theorem \ref{HFTheor3} has been already proved in \cite{kp12} in the context 
of range based estimation of quadratic variation. Very similar results has been established for the 
classical power variations in \cite{jacod08}.

\subsection{Central Limit Theorem}
Having determined the limit in probability we now turn our attention to the associated stable central limit theorems.
In order to introduce the weak limit theory we require some further notation. We denote by $(T_i)_{i\geq 1}$ 
the successive jump times of the Poisson process $N$. Furthermore, we introduce two Brownian motions $(W'_t)_{t\geq 0}$,
$(\widetilde{W}_t)_{t\geq 0}$ and a sequence $(\kappa_i)_{i\geq 1}$ of i.i.d. $\mathcal U([0,1])$-distributed random
variables, which are mutually independent, and independent of $\mathcal F$. Finally, we introduce the process
\begin{align}
U(X, p)_t &=\! p \sum_{i=1}^{N_t} |J_i|^{p-1} \left \{ \sup_{\substack{0\le s \le \kappa_i \\ \kappa_i \le u \le 1}}\!
\left( (\widetilde{W}_{i\text{+}\kappa_i}\!-\!\widetilde{W}_{i\text{+}s})\sigma_{T_i-}\!+\! 
(\widetilde{W}_{i\text{+}u}\!-\!\widetilde{W}_{i\text{+}\kappa_i})\sigma_{T_i}\right) \boldsymbol 1_{\{J_i>0\}} \right. 
\nonumber \\[1.5 ex]
\label{Udef}
&\left. + \sup_{\substack{0\le s \le \kappa_i \\ \kappa_i \le u \le 1}}\!
\left( -(\widetilde{W}_{i\text{+}\kappa_i}\!-\!\widetilde{W}_{i\text{+}s})\sigma_{T_i-}\!-\! 
(\widetilde{W}_{i\text{+}u}\!-\!\widetilde{W}_{i\text{+}\kappa_i})\sigma_{T_i}\right) \boldsymbol 1_{\{J_i<0\}} \right \}
\end{align}
that is defined on the extension of the original space 
$(\Omega, \mathcal F, \mathbb F, \mathbb P)$. The central limit theorem is as follows.
\vspace{4pt}
\begin{theor}[Central Limit Theorem]\label{HFTheor4} \textit{} \newline
(i)	For $p>3$ and fixed $t>0$ we obtain the stable convergence
	\begin{align}\label{CLT2p3}
		\Delta_n^{-\frac 1 2} \left(R(X, p)^n_t - R(X, p)_t\right) \ \stab \ U(X,p)_t.
	\end{align}
(ii) Let $p=2$. Assume that the invertible volatility process $\sigma$ follows the distribution of a discontinuous It\^o semimartingale 
	\begin{align*}
		\sigma_t = \sigma_0 + \int_0^t \tilde{\mu}_s \,\emph{d}s + \int_0^t \tilde \sigma_{s} \,\emph{d}W_s + \int_0^t \tilde v_{s}\, \emph{d}V_s + \tilde Z_t, 
	\end{align*}
where $\tilde \mu,\ \tilde \sigma$ and $\tilde v$ are adapted, c\'adl\'ag processes, $V$ is another Brownian motion 
independent of $W$ and $\tilde Z_t= \sum_{i=0}^{\tilde N_t} \tilde J_i$ is a compound Poisson processes with $\tilde N$
being independent of $W$ ($\tilde N$ and $N$ are possibly
correlated). Then, for any fixed $t>0$, we obtain the stable convergence
	\begin{align}\label{CLT2p2}
		\Delta_n^{-\frac 1 2} \left(R(X, 2)^n_t - R(X, 2)_t\right) \stab U(X,2)_t + 
\sqrt{\lambda^{3, 4}-(\lambda^{3, 2})^2} \int_0^t \sigma_s^2\, dW'_s. 
	\end{align}
\end{theor}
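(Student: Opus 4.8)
The plan is to localise around the finitely many jump times and treat the Brownian contribution and the jump contributions separately, exploiting the fact that with probability tending to one no block $[(i-1)\Delta_n,i\Delta_n]$ contains more than one jump time. First I would use a standard localisation argument to replace $\mu$, $\sigma$ (and in part (ii) also $\tilde\mu,\tilde\sigma,\tilde v$) by bounded processes; this is harmless for stable convergence. Next, decompose $R(X,p)^n_t = R^{c,n}_t + R^{J,n}_t$, where $R^{c,n}_t$ collects the supremum increments over all blocks containing \emph{no} jump and $R^{J,n}_t$ collects the (asymptotically $N_t$ many) blocks that contain exactly one jump time $T_i$. For $p>2$ the continuous part $R^{c,n}_t$ converges to its limit at rate $\Delta_n^{1-p/2}$, so after centering and multiplying by $\Delta_n^{-1/2}$ it is $o_{\mathbb P}(1)$ when $p>3$ (here the exponent $3$ enters: one needs $1-p/2 < -1/2$ together with enough integrability of the fluctuations), which kills the continuous contribution in (i). For $p=2$, by contrast, $R^{c,n}_t$ is exactly the range-based realised variance of the continuous part, and Theorem \ref{HFTheor2} applied to the range functional $g(x)=(\sup x-\inf x)^2$ (as spelled out in Example 3) yields, since $g$ is even, the stable limit $\sqrt{\lambda^{3,4}-(\lambda^{3,2})^2}\int_0^t\sigma_s^2\,dW'_s$; here I would invoke the version of Theorem \ref{HFTheor2} valid for ranges when $\sigma$ is everywhere invertible, noting that the It\^o-semimartingale assumption on $\sigma$ in part (ii) is precisely what makes that theorem applicable.

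The heart of the argument is the analysis of the blocks containing a jump. Fix a jump time $T_i$ and let it fall in block $i(n)$ at relative position $\kappa_i^n := (T_i - (i(n)-1)\Delta_n)/\Delta_n$, which converges in law (stably, jointly over $i$) to a uniform variable $\kappa_i$ independent of $\mathcal F$. On this block, $\sup_{s,u}|X_s-X_u|$ is dominated by $|J_i|$ plus the Brownian oscillation on the two sub-intervals $[(i(n)-1)\Delta_n, T_i)$ and $[T_i, i(n)\Delta_n]$, on which $\sigma$ is close to $\sigma_{T_i-}$ and $\sigma_{T_i}$ respectively. Writing $\sup_{s,u}|X_s-X_u|^p = (|J_i| + \Delta_n^{1/2}\xi_i^n)^p$ with $\xi_i^n$ the suitably signed Brownian contribution, a first-order Taylor expansion in $\Delta_n^{1/2}$ gives $|J_i|^p + p|J_i|^{p-1}\Delta_n^{1/2}\xi_i^n + O(\Delta_n)$. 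The term $|J_i|^p$ is exactly what $R(X,p)_t$ subtracts off; the linear term, after multiplication by $\Delta_n^{-1/2}$, converges. To identify $\xi_i^n$: conditionally on the block containing $T_i$ at relative position $\kappa_i^n$, the rescaled Brownian path on the left piece converges to a Brownian motion run on $[0,\kappa_i]$ with variance $\sigma_{T_i-}^2$ and on the right piece to an independent Brownian motion on $[\kappa_i,1]$ with variance $\sigma_{T_i}^2$; the sign of $J_i$ selects whether we take the sup of $X_u - X_s$ (for $J_i>0$, so the maximiser straddles $T_i$) or of $X_s - X_u$ (for $J_i<0$). This is precisely the structure of \eqref{Udef}. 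The main technical work is justifying the interchange of the supremum with the weak limit and controlling the joint convergence across different jumps; I would do this via a conditioning argument on $\mathcal F$ and the jump times, using that the relevant Brownian bridges on disjoint blocks are asymptotically independent and independent of $\mathcal F$, together with the continuous-mapping-type argument that the functional $x \mapsto \sup_{s\le\kappa,\,\kappa\le u}(x(u)-x(\kappa) + \tilde x(\kappa) - \tilde x(s))$ is continuous on $C([0,1])^2$.

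Finally, I would assemble the pieces: in case (i), the negligibility of $\Delta_n^{-1/2}R^{c,n}_t$ (centered) for $p>3$ plus the jump-block limit give $\Delta_n^{-1/2}(R(X,p)^n_t - R(X,p)_t) \stab U(X,p)_t$; in case (ii), since $p=2$ means the continuous blocks are \emph{not} negligible, one adds the two stable limits — and here one must check that they are conditionally independent given $\mathcal F$, which holds because $W'$ (driving the continuous-part CLT) is built from a different asymptotic source than $\widetilde W$ and the $\kappa_i$ (driving the jump-block fluctuations): the former comes from the CLT fluctuations of range statistics on jump-free blocks, the latter from the Brownian oscillation on the $O(1)$ jump-containing blocks, and these involve disjoint sets of increments. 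The main obstacle I anticipate is exactly this last identification — rigorously disentangling the two noise sources and establishing their joint stable convergence with the claimed independent structure — together with the non-smoothness of the range functional, which forces one to argue via the explicit directional-derivative formula of \cite{cp07} rather than a clean Taylor expansion, and to handle the (probability-zero but delicate) event that the Brownian motion attains its extremum at a non-unique point or near the block endpoint.
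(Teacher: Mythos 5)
Your proposal follows essentially the same route as the paper: localisation, splitting into jump-free and single-jump blocks, negligibility of the centered continuous contribution for $p>3$ via the bound $\Delta_n^{-1/2}\cdot\Delta_n^{p/2-1}\to 0$, a first-order expansion around each jump producing $p|J_i|^{p-1}$ times the signed, straddling Brownian oscillation with frozen volatilities $\sigma_{T_i-}$ and $\sigma_{T_i}$, stable convergence of the rescaled jump positions and Brownian paths to $(\kappa_i,\widetilde W)$, and for $p=2$ the conditionally independent superposition with the range-based CLT of the continuous part via a conditioning argument in the spirit of Jacod's Lemma 5.8. The pieces you flag as delicate (joint stable convergence across the two noise sources, non-smoothness of the range) are exactly the points the paper handles by citing \cite{jp12}, \cite{jacod08} and \cite{cp07}, so the proposal is correct in approach.
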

We remark that Theorem \ref{HFTheor4} is similar in fashion to central limit theorems for classical power variations; see 
\cite{jacod08}. We do believe that \ref{HFTheor4} remains valid for a rather general It\^o semimartingale model (i.e. not only
in the finite activity case), but the proofs become considerably longer. 

After local estimation of $\sigma $ and  jump sizes $J_i$, the \textit{conditional} law of $U(X, p)_t$ given $\mathcal F$ can be simulated. However,
unlike for the mixed normal case in the classical power variation framework, the knowledge of the conditional law of $U(X, p)_t$
is not sufficient for statistical inference (e.g. construction of confidence regions).

\section{Proofs}\vspace{4pt}
First of all, note that without loss of generality we may assume that the processes $\mu,\sigma , \tilde{\mu}, \tilde{\sigma },
\tilde{v}$ are bounded. This follows from a standard localization procedure (see e.g. \cite{bgjps06}). Below, all positive constants
are denoted by $C$ or $C_p$ if they depend on an external parameter $p$, although they may change from line to line.

\paragraph{Proof of Theorem \ref{HFTheor1}}
We begin with some preliminary observations. Denoting 
\[
A_t:= \int_{0}^t\! \mu_s \,\text{d}s, \qquad M_t:= \int_{0}^t\! \sigma_s \,\text{d}W_s,
\]
we find that for $p>0$,
	\begin{align} \nonumber
		\mathbb E\left[\left(\Delta_n^{-\frac 1 2}\ \lVert d_i^n (X)\rVert_\infty\right)^p\right] &\le 
		C_p\Delta_n^{-\frac p 2}\left(\mathbb E\Big[\lVert d_i^n (A)\rVert_\infty^p\Big] + \mathbb E\Big[\lVert d_i^n (M)\rVert_\infty^p\Big]\right) \\ &\le
		C_p\left(\Delta_n^{\frac p 2} \lVert\mu\rVert_\infty^p  + \Delta_n^{-\frac p 2}\mathbb E\bigg[\Big(\int_{(i-1)\Delta_n}^{i\Delta_n} \sigma_s^2 \,\text{d}s\Big)^{\frac p 2}\bigg]\right) \nonumber \\ \label{Deltainfinmom} &\le
		C_p\left(\Delta_n^{\frac p 2}\lVert\mu\rVert_\infty^p + \lVert\sigma\rVert_\infty^p\right)  < \infty
	\end{align}
	where we used the Burkholder-Davis-Gundy inequality and the boundedness of $\mu$ and $\sigma$. Now, by the assumption of polynomial growth, $\left|g(x)\right| \le C (1+ \lVert x\rVert_{\infty}^p)$ for $p>0$ so
	\begin{align} \label{gDeltainfinmom}
		\mathbb E\left[ g(\Delta_n^{-\frac 1 2} \ d_i^n (X))\right] &\le 
		C(1+ \Delta_n^{-\frac p 2}\mathbb E\Big[ \lVert d_i^n (X)\rVert_\infty^p\Big]) < \infty.
	\end{align}
	Define $\beta_i^n:= \Delta_n^{-\frac 1 2} \sigma_{(i-1)\Delta_n}\ d_i^n (W)$, an approximation 
of $\Delta_n^{-\frac 1 2}\ d_i^n (X)$. As in \eqref{Deltainfinmom}, \eqref{gDeltainfinmom} we find that 
	\begin{align}
		\mathbb E\Big[ \lVert\beta_i^n\rVert_\infty^p\Big] &\le C_p ,\quad p>0, \label{betainfinmom} \\
		\mathbb E\Big[\big|g(\beta_i^n)\big|\Big] &\le C. \label{gbetainfinmom}
	\end{align}
	$\beta_i^n$ will serve as a convenient approximation because of its simple form and
	\begin{align}\nonumber
		&\mathbb E\bigg[ \lVert\beta_i^n  -\Delta_n^{-\frac 1 2} d_i^n(X)\rVert_\infty^p\bigg]  \\ \nonumber&=
		\Delta_n^{-\frac p 2}\, \mathbb E\bigg[\sup_{[(i-1)\Delta_n, i\Delta_n]} \Big| \int_{(i-1)\Delta_n}^t\! \mu_s \,\text{d}s + \int_{(i-1)\Delta_n}^t\! \left(\sigma_s-\sigma_{(i-1)\Delta_n}\right) \,\text{d}W_s\Big|^p\bigg] \\ \nonumber&\le
		C \left(\lVert\mu\rVert_\infty^p \Delta_n^{\frac p 2}+ \Delta_n^{- \frac p 2}\, \mathbb E\bigg[\Big(\int_{(i-1)\Delta_n}^{i\Delta_n}\! \left(\sigma_s-\sigma_{(i-1)\Delta_n}\right)^2 \,\text{d}s\Big)^{\frac p 2}\bigg]\right) \\\label{betainapprox1} &\rightarrow
		0,
	\end{align}
	where we used again the Burkholder-Davis-Gundy inequality and for the last step that $\sigma$ is c\'adl\'ag. Returning to the claimed convergence in ucp, let
	\begin{align*}
		U^n_t &:= \Delta_n \sum_{i=1}^{\left\lfloor t/\Delta_n \right\rfloor}  \mathbb E\left[g(\beta_i^n) \left| \ \mathcal F_{(i-1)\Delta_n}\right.\right],  \\ 
		R^{1,n}_t &:= \Delta_n \sum_{i=1}^{\left\lfloor t/\Delta_n \right\rfloor}  \big(g(\beta_i^n)- \mathbb E\left[g(\beta_i^n) \left| \ \mathcal F_{(i-1)\Delta_n}\right.\right]\big), \\
		R^{2,n}_t &:= \Delta_n \sum_{i=1}^{\left\lfloor t/\Delta_n \right\rfloor}  \big(g(\Delta_n^{-\frac 1 2}\ d_i^n (X))-g(\beta_i^n)\big),
	\end{align*}
	for all $t\ge 0,\ n\in\mathbb N$. Clearly, $V(X, g)^n_t= U^n_t + R^{1,n}_t+R^{2,n}_t$. In order to prove \eqref{LLN}, we will first show that the approximation $U^n$ converges to $V(X, g)$ and afterwards that the error terms $R^1,\ R^2$ vanish. By definition, $\mathbb E\left[g(\beta_i^n) \left|\ \mathcal F_{(i-1)\Delta_n}\right.\right]=\rho_{\sigma_{(i-1)\Delta_n}}\left(g\right)$, and  therefore
	\begin{align*}
		 U^n_t = \Delta_n \sum_{i=1}^{\left\lfloor t/\Delta_n \right\rfloor} \rho_{\sigma_{(i-1)\Delta_n}}\left(g\right)
		 \ucp V(X, g)_t  = \int_0^{t} \rho_{\sigma_s}(g) \,\text{d}s
	\end{align*} 
	due to continuity of the function $\rho (g)$.  Turning to the claimed disappearance of $R^{1, n}$ we exploit its martingale property and apply Doob's maximal inequality to get that
	\begin{align*}
		\mathbb P\left[\sup_{0\le t \le T} \left| R^{1, n}_t\right| > \epsilon\right] \leq 
		C \frac{\Delta_n^2}{\epsilon^2} \sum_{i=1}^{\left\lfloor T/\Delta_n\right\rfloor} \mathbb E\left[g(\beta_i^n)^2\right] 
       \leq     C_T \Delta_n \epsilon^{-2} \rightarrow 0 
	\end{align*}
	for each $\epsilon>0$. Regarding $R^{2, n}$, Chebyshev's inequality gives that for $\epsilon >0$,
	\begin{align*}
		\mathbb P\bigg[\sup_{0 \le t \le T}\left|R^{2, n}_t\right|> \epsilon\bigg] \le 
		\frac{\Delta_n}{\epsilon} \sum_{i=1}^{\left\lfloor T/\Delta_n \right\rfloor} \mathbb E\left[\big| g(\Delta_n^{-\frac 1 2}\ d_i^n (X))-g(\beta_i^n)\big|\right].
	\end{align*}
	Now we make use of the locally uniform continuity of $g$. For $K, \hat \epsilon>0$ choose $\delta>0$ as in \textit{(i)}. Defining $A^{i, n, K}\!:=\!\{\lVert \beta_i^n\rVert _\infty + \lVert \Delta_n^{-\frac 1 2}\ d_i^n (X)\rVert _\infty \le K\}$ as well as $A^{i, n, K, \delta}\!:=A^{i, n, K}\cap \{ \lVert \beta_i^n - \Delta_n^{-\frac 1 2}\ d_i^n (X)\rVert _\infty \le \delta\}$ and denoting $\Delta_i^n g:=g(\Delta_n^{-\frac 1 2}\ d_i^n (X))-g(\beta_i^n)$, we find that
	\begin{align}\label{localunifcontvanish}
		\mathbb E\left[|\Delta_i^n g|\right] &= 
		\mathbb E\left[|\Delta_i^n g |\left( \boldsymbol 1_{A^{i,n,K,\delta}} + \boldsymbol 1_{A^{i,n,K}\setminus A^{i,n,K,\delta}} + \boldsymbol 1_{\Omega \setminus A^{i,n, K}}\right)\right] \\ &\le 
		\hat \epsilon + C \Big( \mathbb E\left[  \lVert \beta_i^n - \Delta_n^{-\frac 1 2}\ d_i^n (X)\rVert _\infty \right] / \delta +  1 / K \Big)\nonumber
	\end{align}
	Hence, choosing $K$ and $n$ large, and then $\hat \epsilon$ small, we see that 
$\mathbb P\left[\sup_{0 \le t \le T}|R^{2, n}_t|> \epsilon\right]$ vanishes as $n\rightarrow \infty$ and we are done. \qed \vspace{4pt}

\paragraph{Proof of Theorem \ref{HFTheor2}}
Thanks to $\sigma$ following a diffusion process the approximation $\beta_i^n$ is now sharper than in \eqref{betainapprox1}:
\begin{align}\nonumber
	\Delta_n^{-\frac p 2}\, &\mathbb E\bigg[ \lVert\beta_i^n-\Delta_n^{-\frac 1 2} d_i^n(X)\rVert_\infty^p\bigg] \\ \nonumber &=
	\Delta_n^{- p}\, \mathbb E\bigg[\sup_{t\in [(i-1)\Delta_n, i\Delta_n]} \Big| \int_{(i-1)\Delta_n}^t\!\mu_s \,\text{d}s + \int_{(i-1)\Delta_n}^t \!\left(\sigma_s-\sigma_{(i-1)\Delta_n}\right) \,\text{d}W_s\Big|^p\bigg] \\  \nonumber &\le
	\Delta_n^{-p}\, C \bigg(\lVert\mu\rVert_\infty^p \Delta_n^p+ C\, \mathbb E\bigg[\Big(\int_{(i-1)\Delta_n}^{i\Delta_n}\! \left(\sigma_s-\sigma_{(i-1)\Delta_n}\right)^2 \,\text{d}s\Big)^{\frac p 2}\bigg]\bigg) \\ \label{betainapprox2} &\le
	C \Big(\lVert\mu\rVert_\infty^p + \Delta_n^{-\frac p 2} \mathbb E\Big[\sup_{s\in [(i-1)\Delta_n,i\Delta_n]}|\sigma_s-\sigma_{(i-1)\Delta_n}|^p\Big]\Big) \\ \nonumber&\le
	C \bigg(\lVert\mu\rVert_\infty^p +  \Delta_n^{\frac p 2} \lVert \tilde \mu\rVert _\infty + \Delta_n^{-\frac p 2} \mathbb E\bigg[  \Big(\int_{(i-1)\Delta_n}^{i\Delta_n} \!(\tilde \sigma_s-\tilde \sigma_{(i-1)\Delta_n})^2\,\text{d}s\Big)^{\frac p 2}\bigg]\bigg) \\ \nonumber & \le 
	C.
\end{align}
Again, we used the Burkholder-Davis-Gundy inequality and the c\'adl\'ag property of $\tilde \sigma$. 
In order to prove \eqref{CLT} we split up the original term $\Delta_n^{-\frac 1 2}\left(V(X,g)^n_t-V(X,g)_t\right)$ 
into an approximation and several error terms:
\begin{align*}
	\Delta_n^{-\frac 1 2}\left(V(X,g)^n_t-V(X,g)_t\right) &= 
	\Delta_n^{-\frac 1 2} \left(\Delta_n \sum_{i=1}^{\left\lfloor t/\Delta_n\right\rfloor} g(\Delta_n^{-\frac 1 2} d_i^n (X)) - \int_0^t \rho_{\sigma_s}(g)\,\text{d}s\right) \\ &=: 
	U^n_t + R^{1, n}_t + R^{2, n}_t + R^{3, n}_t + R^{4, n}_t,
\end{align*}
where
\begin{align*}
	U^n_t &= \Delta_n^{\frac 1 2} \sum_{i=1}^{\left\lfloor t/\Delta_n \right\rfloor} \left( g(\beta_i^n) - \mathbb E\left[g(\beta_i^n)\left|\ \mathcal F_{(i-1) \Delta_n}\right.\right]\right),  \\
	R^{1, n}_t &= \Delta_n^{\frac 1 2} \sum_{i=1}^{\left\lfloor t/\Delta_n \right\rfloor} \left(g(\Delta_n^{-\frac 1 2} d_i^n(X))-g(\beta_i^n) - \mathbb E\left[g(\Delta_n^{-\frac 1 2} d_i^n(X)) -g(\beta_i^n)\left|\ \mathcal F_{(i-1) \Delta_n}\right.\right]\right),  \\
	R^{2, n}_t &= \Delta_n^{\frac 1 2} \sum_{i=1}^{\left\lfloor t/\Delta_n \right\rfloor} \mathbb E\left[g(\Delta_n^{-\frac 1 2} d_i^n(X)) -g(\beta_i^n)\left|\ \mathcal F_{(i-1) \Delta_n}\right.\right],  \\
	R^{3, n}_t &= \Delta_n^{-\frac 1 2} \sum_{i=1}^{\left\lfloor t/\Delta_n\right\rfloor} \left(\Delta_n \mathbb E\left[g(\beta_i^n)\left|\ \mathcal F_{(i-1) \Delta_n}\right.\right] - \int_{(i-1)\Delta_n}^{i\Delta_n} \rho_{\sigma_s}(g)\,\text{d}s\right),\\
	R^{4, n}_t &= \Delta_n^{-\frac 1 2} \int_{\left\lfloor t/\Delta_n\right\rfloor\Delta_n}^t \rho_{\sigma_s}(g)\,\text{d}s.
\end{align*}

Obviously, $R^{4,n}_t \stackrel{ucp}{\rightarrow} 0$ due to the boundedness of $\sigma$ and the continuity of $\rho$. Furthermore, we also have
\begin{lemma}
Under conditions of Theorem \ref{HFTheor2} we obtain
	\begin{itemize}\setlength{\itemsep}{8pt}
		\item[(i)] 
			$U^n_t\ \stab\ \int_0^t \rho_{\sigma_s}^{(1)}(g)\,\emph{d}W_s + \int_0^t \sqrt{\rho_{\sigma_s}(g^2)-\rho_{\sigma_s}(g)^2-(\rho_{\sigma_s}^{(1)}(g))^2}\, \emph{d}W'_s $,
		\item[(ii)] 
			$R^{1,n}_t\ \stackrel{ucp}{\rightarrow}\ 0$, 
		\item[(iii)]
			$R^{2,n}_t\ \stackrel{ucp}{\rightarrow}\ \int_0^t \mu_s \rho_{\sigma_s}^{(2)}(g') \,\emph{d}s + \frac 1 2 \int_0^t \tilde \sigma_s \rho_{\sigma_s}^{(3)}(g') \,\emph{d}s  - \frac 1 2 \int_0^t \tilde \sigma_s \rho_{\sigma_s}^{(2)}(g')\,\emph{d}s$,
		\item[(iv)]
			$R^{3,n}_t\ \stackrel{ucp}{\rightarrow}\ 0$.
	\end{itemize}
\end{lemma}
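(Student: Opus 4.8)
The plan is to treat the four claims separately, in the order (iv), (iii), (ii), (i), since the first three are ucp statements that can be attacked by the same moment-and-expansion machinery, while (i) is the genuine CLT piece requiring a martingale limit theorem. Throughout I would rely on the sharper approximation estimate \eqref{betainapprox2}, which shows $\Delta_n^{-p/2}\E\big[\lVert\beta_i^n-\Delta_n^{-1/2}d_i^n(X)\rVert_\infty^p\big]\le C$, together with the moment bounds \eqref{betainfinmom}, \eqref{gbetainfinmom}. The basic device for (i)--(iii) is a first-order Gâteaux expansion of $g$: writing $\Delta_n^{-1/2}d_i^n(X)=\beta_i^n + r_i^n$ with $r_i^n:=\Delta_n^{-1/2}d_i^n(X)-\beta_i^n$, one has $g(\Delta_n^{-1/2}d_i^n(X))-g(\beta_i^n)=g'_{r_i^n}(\beta_i^n) + (\text{remainder})$, and conditions (ii), (iii) on the Gâteaux derivative ensure the remainder is negligible after multiplication by $\Delta_n^{1/2}$ and summation (using local uniform continuity of $g'$ exactly as local uniform continuity of $g$ was used in the proof of Theorem \ref{HFTheor1}, via a splitting on the events $A^{i,n,K}$, $A^{i,n,K,\delta}$).

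For (iv), $R^{3,n}$, I would write $\Delta_n\E[g(\beta_i^n)\,|\,\mathcal F_{(i-1)\Delta_n}]=\Delta_n\rho_{\sigma_{(i-1)\Delta_n}}(g)$ and compare with $\int_{(i-1)\Delta_n}^{i\Delta_n}\rho_{\sigma_s}(g)\,\text{d}s$; the difference is controlled by $\Delta_n^{1/2}\sum_i\int_{(i-1)\Delta_n}^{i\Delta_n}|\rho_{\sigma_s}(g)-\rho_{\sigma_{(i-1)\Delta_n}}(g)|\,\text{d}s$, which vanishes because $s\mapsto\rho_{\sigma_s}(g)$ is itself (locally) an It\^o semimartingale — here one needs a Lipschitz-type continuity of $z\mapsto\rho_z(g)$ in $z$, obtained from the polynomial growth of $g$ and $\sigma$ being a diffusion, so the increments of $\rho_{\sigma_s}(g)$ are $O_{\mathbb P}(\Delta_n^{1/2})$ on each block and the extra $\Delta_n^{-1/2}$ is absorbed. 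For (iii), $R^{2,n}$, the expansion gives the leading term $\Delta_n^{1/2}\sum_i\E[g'_{r_i^n}(\beta_i^n)\,|\,\mathcal F_{(i-1)\Delta_n}]$; substituting the decomposition $r_i^n\approx\Delta_n^{1/2}\mu_{(i-1)\Delta_n}\{s\}_{s\in[0,1]} + \Delta_n^{-1/2}\int_{(i-1)\Delta_n}^{\cdot}(\sigma_u-\sigma_{(i-1)\Delta_n})\,\text{d}W_u$, and then using $\sigma_u-\sigma_{(i-1)\Delta_n}\approx\tilde\sigma_{(i-1)\Delta_n}(W_u-W_{(i-1)\Delta_n})$, one recognises inside the conditional expectation the three functionals $\rho^{(2)}_{\sigma_{(i-1)\Delta_n}}(g')$, $\rho^{(3)}_{\sigma_{(i-1)\Delta_n}}(g')$ (the $W_s^2$ comes from the It\^o correction / the quadratic behaviour of $\int(W_u-W_{(i-1)\Delta_n})\text{d}W_u$), and the $-\tfrac12\rho^{(2)}$ term; a Riemann-sum argument then yields the claimed integrals. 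The drift $\tilde\mu$ and the $\tilde v\,\text{d}V$ part contribute negligibly because they carry an extra power of $\Delta_n^{1/2}$ or are uncorrelated with $W$.

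For (ii), $R^{1,n}$, I would observe that it is a sum of martingale differences (each summand is centered given $\mathcal F_{(i-1)\Delta_n}$), so by Doob's inequality it suffices to show $\Delta_n\sum_i\E\big[(g(\Delta_n^{-1/2}d_i^n(X))-g(\beta_i^n))^2\big]\to 0$; but $g(\Delta_n^{-1/2}d_i^n(X))-g(\beta_i^n)=g'_{r_i^n}(\beta_i^n)+o$, and $\E[(g'_{r_i^n}(\beta_i^n))^2]\le C\,\E[\lVert r_i^n\rVert_\infty^{2}\,(1+\lVert\beta_i^n\rVert_\infty^{2p})]$, which is $O(\Delta_n)$ by \eqref{betainapprox2}, Cauchy-Schwarz, and the moment bounds — so the sum is $O(\Delta_n)$ and vanishes. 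Finally, for (i), $U^n$ is a triangular array of $\mathcal F_{i\Delta_n}$-martingale differences $\xi_i^n:=\Delta_n^{1/2}(g(\beta_i^n)-\E[g(\beta_i^n)\,|\,\mathcal F_{(i-1)\Delta_n}])$, and I would apply the standard stable CLT for such arrays (as in \cite{bgjps06, jp12}): one must check (a) $\sum_i\E[(\xi_i^n)^2\,|\,\mathcal F_{(i-1)\Delta_n}]\pn\int_0^t(\rho_{\sigma_s}(g^2)-\rho_{\sigma_s}(g)^2)\,\text{d}s$, (b) $\sum_i\E[\xi_i^n\,\Delta_i^nW\,|\,\mathcal F_{(i-1)\Delta_n}]\pn\int_0^t\rho^{(1)}_{\sigma_s}(g)\,\text{d}s$ (the covariation with $W$, which produces the $\text{d}W_s$ term with integrand $\rho^{(1)}_{\sigma_s}(g)$), (c) $\sum_i\E[\xi_i^n\,\Delta_i^nN']\to 0$ for any bounded martingale $N'$ orthogonal to $W$ (and to $V$), and (d) a conditional-Lindeberg / fourth-moment bound $\sum_i\E[(\xi_i^n)^4]\to 0$. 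Each conditional expectation in (a)--(c) is, by scaling and the independence of $d_i^n(W)$ from $\mathcal F_{(i-1)\Delta_n}$, a deterministic function of $\sigma_{(i-1)\Delta_n}$ — namely $\rho_{\sigma_{(i-1)\Delta_n}}(g^2)-\rho_{\sigma_{(i-1)\Delta_n}}(g)^2$ and $\rho^{(1)}_{\sigma_{(i-1)\Delta_n}}(g)$ — so (a), (b) follow from Riemann-sum convergence and the continuity of these maps in $\sigma$; the subtraction of $(\rho^{(1)}_{\sigma_s}(g))^2$ inside the square root of $u^3$ is exactly the usual decomposition of the bracket into the $W$-part and the $W'$-part, giving the orthogonal Brownian motion $W'$ on the extension. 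The main obstacle I anticipate is not in any single one of these steps individually but in the rigorous handling of the Gâteaux expansion remainder on $C([0,1])$: unlike the finite-dimensional case, one cannot invoke a clean Taylor theorem with an integral-form remainder, so controlling $g(\beta_i^n+r_i^n)-g(\beta_i^n)-g'_{r_i^n}(\beta_i^n)$ uniformly requires careful use of local uniform continuity of $g'$ (condition (ii)) together with the event-splitting trick from Theorem \ref{HFTheor1}, and this is where the "topological structure of $C([0,1])$" mentioned in the introduction genuinely bites.
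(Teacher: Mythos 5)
Your overall architecture for (i)--(iii) matches the paper's: (i) is proved by verifying the conditions of Jacod's stable CLT for triangular arrays of martingale differences (conditional variance, covariation with $W$, Lindeberg via fourth moments, and orthogonality to $\mathcal M_b(W)^\perp$, which the paper handles via the martingale representation theorem since $g(\beta_i^n)$ is a functional of $W$ alone); (iii) is proved by a G\^ateaux mean-value expansion with coefficients frozen at $(i-1)\Delta_n$, the It\^o identity $\int_{(i-1)\Delta_n}^{\cdot}(W_s-W_{(i-1)\Delta_n})\,\mathrm{d}W_s=\tfrac12((W_\cdot-W_{(i-1)\Delta_n})^2-\cdot)$ producing the $\rho^{(3)}-\rho^{(2)}$ combination, and the $\tilde v\,\mathrm{d}V$ term killed by independence of $W$ and $V$. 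For (ii) you route through the G\^ateaux derivative, whereas the paper only needs Doob plus the event-splitting argument from \eqref{localunifcontvanish} applied to $\E[(\Delta_i^n g)^2]$; your version works but is heavier than necessary.

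There is, however, a genuine gap in your treatment of (iv). Bounding $|R^{3,n}_t|$ by $\Delta_n^{-1/2}\sum_i\int_{(i-1)\Delta_n}^{i\Delta_n}|\rho_{\sigma_s}(g)-\rho_{\sigma_{(i-1)\Delta_n}}(g)|\,\mathrm{d}s$ and using only that the block increments of $\rho_{\sigma_\cdot}(g)$ are $O_{\mathbb P}(\Delta_n^{1/2})$ gives
\[
\Delta_n^{-1/2}\cdot\big\lfloor t/\Delta_n\big\rfloor\cdot\Delta_n\cdot O_{\mathbb P}(\Delta_n^{1/2})=O_{\mathbb P}(t),
\]
which does not vanish: the "extra $\Delta_n^{-1/2}$" is exactly \emph{not} absorbed by a triangle-inequality estimate. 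The convergence $R^{3,n}\ucp 0$ requires the cancellation coming from the martingale structure of $\sigma$. The paper first shows that $\psi(x):=\rho_x(g)$ is $C^1$ (using polynomial growth of $g$ and $g'$), then writes $\rho_{\sigma_{(i-1)\Delta_n}}(g)-\rho_{\sigma_s}(g)=\psi'(\sigma_{(i-1)\Delta_n})(\sigma_{(i-1)\Delta_n}-\sigma_s)+(\psi'(\chi_s^n)-\psi'(\sigma_{(i-1)\Delta_n}))(\sigma_{(i-1)\Delta_n}-\sigma_s)$, and splits the leading term into a drift part of order $\Delta_n^{3/2}$ per block (summable to $O(\Delta_n^{1/2})$) and a \emph{centered} martingale part whose sum is controlled by Doob's inequality and orthogonality of increments, i.e.\ $\E[(\sum_i\mu_i^n(1.2))^2]=\sum_i\E[(\mu_i^n(1.2))^2]=O(\Delta_n)$ --- a square-root gain unavailable to the absolute-value bound. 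The second-order term is then handled by uniform continuity of $\psi'$ on $[-\lVert\sigma\rVert_\infty,\lVert\sigma\rVert_\infty]$ together with the $L^2$ bound $\E[|\sigma_s-\sigma_{(i-1)\Delta_n}|^2]\le C\Delta_n$. Without this decomposition --- in particular without establishing differentiability of $x\mapsto\rho_x(g)$ and exploiting the conditional centering of the martingale part of $\sigma$ --- step (iv) does not close.
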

\begin{proof}\mbox{}
	\begin{itemize}
		\item[\textit{(i)}] 
			Defining $\xi_i^n :=\Delta_n^{\frac 1 2}(g(\beta_i^n)-\mathbb E[g(\beta_i^n)\left|\ \mathcal F_{(i-1)\Delta_n}\right.])$ we have $U^n_t= \sum_{i=1}^{\left\lfloor t/\Delta_n \right\rfloor} \xi_i^n$. Now we will verify the conditions of Jacod's theorem of stable convergence for semimartingales (see \cite{jacod94}). Introducing the notation  $\Delta_i^n  W := W_{i\Delta_n}-W_{(i-1)\Delta_n}$ we find that
			\begin{align*}
				\mathbb E[\xi_i^n\left| \ \mathcal F_{(i-1) \Delta_n}\right.] &= 0,
			\end{align*}
\begin{align*}	
				\sum_{i=1}^{\left\lfloor t/\Delta_n \right\rfloor} \mathbb E[(\xi_i^n)^2\left|\ \mathcal F_{(i-1) \Delta_n}\right.] &= 
				\Delta_n \sum_{i=1}^{\left\lfloor t/\Delta_n \right\rfloor}\left( \mathbb E[g(\beta_i^n)^2\left| \ \mathcal F_{(i-1) \Delta_n}\right.] - \mathbb E[g(\beta_i^n)\left| \ \mathcal F_{(i-1) \Delta_n}\right.]^2\right) \\ &=
				\Delta_n \sum_{i=1}^{\left\lfloor t/\Delta_n \right\rfloor} \left(\rho_{\sigma_{(i-1)\Delta_n}}(g^2)-\rho_{\sigma_{(i-1)\Delta_n}}(g)^2\right) \\ &\ucp
				\int_0^t \left(\rho_{\sigma_s}(g^2)-\rho_{\sigma_s}(g)^2\right) \,\text{d}s , 
\end{align*}
\begin{align*}				
				\sum_{i=1}^{\left\lfloor t/\Delta_n \right\rfloor} \mathbb E[\xi_i^n\, \Delta_i^n W \left|\ \mathcal F_{(i-1) \Delta_n}\right.] &=
				\Delta_n^{\frac 1 2} \sum_{i=1}^{\left\lfloor t/\Delta_n \right\rfloor} \mathbb E[g(\beta_i^n)\, \Delta_i^n W \left|\ \mathcal F_{(i-1) \Delta_n}\right.] \\ &=
				\Delta_n \sum_{i=1}^{\left\lfloor t/\Delta_n \right\rfloor} \left.\mathbb E[g(\left\{x\,W_s;\ s\in [0, 1]\right\}) W_1]\right|_{x=\sigma_{(i-1)\Delta_n}} \\ &\ucp
				\int_0^t\rho_{\sigma_s}^{(1)}(g) \,\text{d}s ,
\end{align*}
\begin{align*}				
				\sum_{i=1}^{\left\lfloor t/\Delta_n \right\rfloor} \mathbb E[(\xi_i^n)^2\boldsymbol 1_{\left\{\left|\xi_i^n\right|> \epsilon\right\}} \left|\ \mathcal F_{(i-1) \Delta_n}\right.] &\le
				\frac{\Delta_n^2}{ \epsilon^2} \sum_{i=1}^{\left\lfloor t/\Delta_n \right\rfloor} \mathbb E[(g(\beta_i^n)-\mathbb E[g(\beta_i^n)\left|\ \mathcal F_{(i-1) \Delta_n}\right.])^4\left| \ \mathcal F_{(i-1) \Delta_n}\right.] \\ &\le
				\Delta_n C / \epsilon^2 \rightarrow 0.
			\end{align*}
			
			Finally, let $N\in \mathcal M_b(W)^\bot$, the space of all bounded $(\mathbb P, \mathbb F)$-martingales that have zero quadratic covariation with $W$. 
Define 
$M_u:= \mathbb E[g(\beta_i^n) | \mathcal F_{u}]$ for $u\geq (j-1)\Delta_n$. By the martingale representation theorem we deduce the
identity
\begin{align*} 
M_u = M_{(i-1)\Delta_n} + \int_{(i-1)\Delta_n}^u \eta_s \, \text{d}W_s
\end{align*}    
for a suitable predictable process $\eta$. By the It\^o isometry we conclude that  
\begin{align*} 
\mathbb E[g(\beta_i^n) \Delta_i^n N | \mathcal F_{(i-1) \Delta_n}] &= 
\mathbb E[M_{i\Delta_n} \Delta _i^n N | \mathcal F_{(i-1) \Delta_n}] \\
&= \mathbb E[ \Delta_i^n M \Delta_i^n N | \mathcal F_{(i-1) \Delta_n}] =0.
\end{align*}
Hence, Jacod's convergence theorem (see \cite[Theorem IX.7.28]{JS}) gives 
			\begin{align*}
				U^n_t\ \stab\ \int_0^t \rho_{\sigma_s}^{(1)}(g)\,\text{d}W_s + \int_0^t 
\sqrt{\rho_{\sigma_s}(g^2)-\rho_{\sigma_s}(g)^2-(\rho_{\sigma_s}^{(1)}(g))^2}\, \text{d}W'_s.
			\end{align*}
			\qed
			
		\item[\textit{(ii)}] 
			Let $\eta_i^n := \Delta_n^{\frac 1 2}(g(\Delta_n^{-\frac 1 2}\,d_i^n (X))-g(\beta_i^n))$ so $R^{1, n}_t = \sum_{i=1}^{\left\lfloor t/\Delta_n \right\rfloor} (\eta_i^n - \mathbb E[\eta_i^n |\ \mathcal F_{(i-1) \Delta_n}])$. Since $R^{1, n}$ is a martingale we may apply Doob's inequality to obtain
			\begin{align*}
				\mathbb P\left[\sup_{t \le T}\left|R^{1, n}_t\right| > \epsilon\right] \le 
				C\frac{\Delta_n}{\epsilon^2}   \sum_{i=1}^{\left\lfloor T/\Delta_n \right\rfloor} \mathbb E\Big[\big(g(\Delta_n^{-\frac 1 2}\,d_i^n(X))-g(\beta_i^n)\big)^2\Big].
			\end{align*}
			By the same argument as in \eqref{localunifcontvanish}, making use of the locally uniform continuity of $g$, we find that the last term converges to $0$. \qed
			
		\item[\textit{(iii)}] 
			By the assumed G\^ateaux  differentiability of $g$ the mean-value theorem gives 
			\begin{align*}
				g(y)-g(x) = 
				g'_{y-x}(x+\hat t(y-x)) 
			\end{align*}
			for some $\hat t\in [0,1]$. Let us again use the notation $f(x;y):=g'_y(x)$.  
We expand $R^{2, n}= R^{2.1, n}+R^{2.2, n}$ where
			\begin{align*}
				R^{2.1, n}_t&:= \Delta_n^{\frac 1 2} \sum_{i=1}^{\left\lfloor t/\Delta_n \right\rfloor} \mathbb E\left[f'(\beta_i^n;\ \Delta_n^{-\frac 1 2} d_i^n(X) -\beta_i^n) \left|\ \mathcal F_{(i-1)\Delta_n}\right.\right], \\ 
				R^{2.2, n}_t&:= \Delta_n^{\frac 1 2} \sum_{i=1}^{\left\lfloor t/\Delta_n \right\rfloor} \mathbb E\left[\big(f'(\chi_i^n;\ \Delta_n^{-\frac 1 2} d_i^n(X) -\beta_i^n)-f'(\beta_i^n;\ \Delta_n^{-\frac 1 2} d_i^n(X) -\beta_i^n)\big) \left|\ \mathcal F_{(i-1)\Delta_n}\right.\right],
			\end{align*}
			with $\chi_i^n= \beta_i^n + \hat t_i^n (\Delta_n^{-\frac 1 2}\, d_i^n(X) -\beta_i^n)$ and $\hat t_i^n \in [0, 1]$. Decompose also $\Delta_n^{-\frac 1 2}\, d_i^n(X) -\beta_i^n= V_i^n(1) + V_i^n(2)$ where
			\begin{align*}
				V_i^n(1)_t &:= \Delta_n^{-\frac 1 2}\bigg(t\, \Delta_n\, \mu_{(i-1)\Delta_n} \\
				&+\int_{(i-1)\Delta_n}^{(i-1\text{+}t)\Delta_n}\!\Big({\tilde \sigma}_{(i-1)\Delta_n}(W_s - W_{(i-1)\Delta_n}) + {\tilde v}_{(i-1)\Delta_n}(V_s - V_{(i-1)\Delta_n})\Big)\,\text{d}W_s\bigg) \\ &=
				\Delta_n^{-\frac 1 2}\bigg(t\, \Delta_n\, \mu_{(i-1)\Delta_n}+ \frac 1 2\, {\tilde \sigma}_{(i-1)\Delta_n}((W_{(i-1\text{+}t)\Delta_n}-W_{(i-1)\Delta_n})^2-t\, \Delta_n) \\ &
				+{\tilde v}_{(i-1)\Delta_n}\int_{(i-1)\Delta_n}^{(i-1\text{+}t)\Delta_n} (V_s - V_{(i-1)\Delta_n})\,\text{d}W_s\bigg),  \\				
				V_i^n(2)_t :&= 
				\Delta_n^{-\frac 1 2}\Bigg(\int_{(i-1)\Delta_n}^{(i-1\text{+}t)\Delta_n} \big(\mu_s-\mu_{(i-1)\Delta_n}\big)\,\text{d}s\  +  \Big(\int_{(i-1)\Delta_n}^s\big({\tilde v}_u-{\tilde v}_{(i-1)\Delta_n}\big)\text{d}V_u\Big) \,\text{d}W_s \\ &
				+\Big(\int_{(i-1)\Delta_n}^s{\tilde \mu}_u\, \text{du }+ \int_{(i-1)\Delta_n}^s({\tilde \sigma}_u-{\tilde \sigma}_{(i-1)\Delta_n})\,\text{d}W_u\Big)\text{d}W_s\Bigg)
			\end{align*} 
			for $t\in [0,1]$. Now, by the linearity of $f$ in the second argument
			\begin{align*}
				\Delta_n^{\frac 1 2}& \sum_{i=1}^{\left\lfloor t/\Delta_n \right\rfloor} \mathbb E\Big[f(\beta_i^n;\ V_i^n(1)) \left|\ \mathcal F_{(i-1)\Delta_n}\right.\Big] \\ &=
				\Delta_n \sum_{i=1}^{\left\lfloor t/\Delta_n \right\rfloor}\left(\mu_{(i-1)\Delta_n} \left.\mathbb E\Big[f(\{x\, W_s;\ s\in [0,1]\};\ \{s;\ s\in [0, 1]\})\Big]\right|_{x=\sigma_{(i-1)\Delta_n}}\right.  \\ &\quad \quad\quad \left.+ \
				\frac 1 2\, {\tilde \sigma}_{(i-1)\Delta_n} \left.\mathbb E\Big[f(\{x\, W_s;\ s\in [0,1]\};\ \{W_s^2-s;\ s\in [0, 1]\})\Big]\right|_{x=\sigma_{(i-1)\Delta_n}} \right) \\ &=
				\Delta_n \sum_{i=1}^{\left\lfloor t/\Delta_n \right\rfloor}\left(\mu_{(i-1)\Delta_n}\, \rho_{\sigma_{(i-1)\Delta_n}}^{(2)}(f) + \frac 1 2\,  {\tilde \sigma}_{(i-1)\Delta_n} \left(\rho_{\sigma_{(i-1)\Delta_n}}^{(3)}(f) - \rho_{\sigma_{(i-1)\Delta_n}}^{(2)}(f)\right)\right) \\ &\ucp
				\int_0^t \mu_s\, \rho_{\sigma_s}^{(2)}(f) \,\text{d}s + \frac 1 2 \int_0^t \tilde \sigma_s\, \rho_{\sigma_s}^{(3)}(f) \,\text{d}s  - \frac 1 2 \int_0^t \tilde \sigma_s\, \rho_{\sigma_s}^{(2)}(f)\,\text{d}s, 
			\end{align*}
			where we used the independence of $W$ and $V$. Due to linearity we observe the identity
			\[
			f(\beta_i^n;\ V_i^n(2)) = f(\beta_i^n;\ V_i^n(2)/\lVert V_i^n(2)\rVert _\infty)\lVert V_i^n(2)\rVert _\infty,
\]
whenever $\lVert V_i^n(2)\rVert _\infty>0$ and $0$ otherwise. Hence, we deduce that
			\begin{align*}
				\Delta_n^{\frac 1 2}& \sum_{i=1}^{\left\lfloor t/\Delta_n \right\rfloor} \mathbb E\Big[\big|f(\beta_i^n;\ V_i^n(2))\big| \Big] \\ &= 
				\Delta_n^{\frac 1 2} \sum_{i=1}^{\left\lfloor t/\Delta_n \right\rfloor} \mathbb E\Big[\big|f(\beta_i^n;\ V_i^n(2)/\lVert V_i^n(2)\rVert _\infty)\big|\lVert V_i^n(2)\rVert _\infty \Big] \\ &\le 
				\Delta_n^{\frac 1 2} C \sum_{i=1}^{\left\lfloor t/\Delta_n \right\rfloor}  \mathbb E\Big[\big\lVert V_i^n(2)\big\rVert _\infty^2 \Big]^{\frac 1 2} \\ & \le 
				C \bigg(\sum_{i=1}^{\left\lfloor t/\Delta_n \right\rfloor} \mathbb E\Big[\big\lVert V_i^n(2)\big\rVert _\infty^2 \Big]\bigg)^{\frac 1 2} \rightarrow 
				0
			\end{align*}
			by the Cauchy-Schwarz inequality and the polynomial growth of $f$. So we are only left to prove that $R^{2.2, n} \stackrel{ucp}{\rightarrow}  0$. Defining $\xi_i^n:=\frac{\Delta_n^{-\frac 1 2} d_i^n(X) -\beta_i^n}{\lVert \Delta_n^{-\frac 1 2} d_i^n(X) -\beta_i^n\rVert _\infty}$ for $\lVert \Delta_n^{-\frac 1 2} d_i^n(X) -\beta_i^n\rVert _\infty>0$ and $0$ otherwise, we get
			\begin{align*}
				\big|R^{2.2, n}_t\big| \le \Delta_n^{\frac 1 2} \sum_{i=1}^{\left\lfloor t/\Delta_n \right\rfloor} \mathbb E\left[\big|f(\chi_i^n;\ \xi_i^n)-f(\beta_i^n;\ \xi_i^n)\big|\big\lVert \Delta_n^{-\frac 1 2}\, d_i^n(X) -\beta_i^n\big\rVert _\infty \left|\ \mathcal F_{(i-1) \Delta_n}\right.\right].
			\end{align*}
			Therefore,
			\begin{align*}
				\mathbb P\Big[\sup_{t\le T} &\big|R^{2.2, n}_t\big| > \epsilon\Big]  \\ &\le 
				\frac{\Delta_n^{\frac 1 2}}{\epsilon} \sum_{i=1}^{\left\lfloor T/\Delta_n \right\rfloor} \mathbb E\Big[\big|f(\chi_i^n;\ \xi_i^n)-f(\beta_i^n;\ \xi_i^n)\big|\big\lVert \Delta_n^{-\frac 1 2}\, d_i^n(X) -\beta_i^n\big\rVert _\infty\Big] \\ &\le
				C\frac{\Delta_n }{\epsilon} \sum_{i=1}^{\left\lfloor T/\Delta_n \right\rfloor} \sqrt{\mathbb E\Big[\big(f(\chi_i^n;\ \xi_i^n)-f(\beta_i^n;\ \xi_i^n)\big)^2\Big]} \rightarrow
				0,
			\end{align*}
			where again we used Jensen's and Cauchy-Schwarz inequality and the local H\"older continuity of $f$ (uniformly on the unit circle in the second argument). Putting everything together, we have thus proven \textit{(iii)}. \qed
		
		\item[\textit{(iv)}] We want to show that 
			\begin{align*}
				R^{3, n}_t = \Delta_n^{-\frac 1 2} \sum_{i=1}^{\left\lfloor t/\Delta_n\right\rfloor} \left(\Delta_n \mathbb E\left[g(\beta_i^n)\left|\ \mathcal F_{(i-1) \Delta_n}\right.\right] - \int_{(i-1)\Delta_n}^{i\Delta_n} \rho_{\sigma_s}(g)\,\text{d}s\right) 
\ucp 0.
			\end{align*}
			Define $\mu_i^n:=\Delta_n^{-\frac 1 2}\int_{(i-1)\Delta_n}^{i\Delta_n}(\rho_{\sigma_{(i-1)\Delta_n}}(g)-\rho_{\sigma_s}(g))\, \text{d}s$ so $R^{3, n}_t=\sum_{i=1}^{\left\lfloor t/\Delta_n\right\rfloor} \mu_i^n$. Thanks to the differentiability of $g$ and 
the polynomial growth of $g$ and $g'$, we find that 
			\begin{align*}
				\lim_{h\rightarrow 0} \frac{\rho_{x+h}(g)-\rho_x(g)}h = 
				\E[g'_{W}(xW)],		
			\end{align*}
			so the derivative of $\rho_x(g)=:\psi(x)$ exists. Similarly, using that $g'$ is continuous, linear in the second argument and of polynomial growth in the first argument, we see that $\psi'$ is continuous. This allows us to expand $\mu_i^n=:\mu_i^n(1) + \mu_i^n(2)$ where
			\begin{align*}
				\mu_i^n(1)&= \Delta_n^{-\frac 1 2}\, \psi'(\sigma_{(i-1)\Delta_n})\int_{(i-1)\Delta_n}^{i\Delta_n}(\sigma_{(i-1)\Delta_n}-\sigma_s) \,\text{d}s, \\ 
				\mu_i^n(2)&= \Delta_n^{-\frac 1 2} \int_{(i-1)\Delta_n}^{i\Delta_n}(\psi'(\chi_s^n)-\psi'(\sigma_{(i-1)\Delta_n}))(\sigma_{(i-1)\Delta_n}-\sigma_s) \,\text{d}s
			\end{align*}
			with $\left|\chi_s^n-\sigma_{(i-1)\Delta_n}\right| \le \left|\sigma_{i\Delta_n}-\sigma_{(i-1)\Delta_n}\right|$. Now, decompose $-\mu_i^n(1)$ into a martingale increment $\mu_i^n(1.2)$ and a remainder term $\mu_i^n(1.1)$, i.e. 
$-\mu_i^n(1)=\mu_i^n(1.1) + \mu_i^n(1.2)$ with 
			\begin{align*}
				\mu_i^n(1.1)&= \Delta_n^{-\frac 1 2} \psi'(\sigma_{(i-1)\Delta_n})\int_{(i-1)\Delta_n}^{i\Delta_n}\left(\int_{(i-1)\Delta_n}^s \tilde \mu_u\, \text{du}\right)\,\text{d}s, \\
				\mu_i^n(1.2)&= \Delta_n^{-\frac 1 2} \psi'(\sigma_{(i-1)\Delta_n})\int_{(i-1)\Delta_n}^{i\Delta_n}\left(\int_{(i-1)\Delta_n}^s \tilde \sigma_u \,\text{d}W_u+\int_{(i-1)\Delta_n}^s \tilde v_u\, \text{d}V_u\right)\,\text{d}s.
			\end{align*}
			Observing that $\mu_i^n(1.1)\le \Delta_n^{\frac 3 2}\, \sup_{\left|x\right|\le \left|\sigma\right|}\psi'(x)\,  \lVert \tilde \mu\rVert_\infty$, its convergence to $0$ follows immediately. With the help of Doob's inequality,
			\begin{align*}
				\mathbb P\Big[\sup_{t\le T}\Big|\sum_{i=1}^{\left\lfloor t/\Delta_n\right\rfloor}\mu_i^n(1.2)\Big|> \epsilon\Big] & \le 
				C/\epsilon^2\,  \mathbb E\Big[\Big(\sum_{i=1}^{\left\lfloor t/\Delta_n\right\rfloor}\mu_i^n(1.2)\Big)^2\Big] \\ &=
				C / \epsilon^2 \sum_{i=1}^{\left\lfloor t/\Delta_n\right\rfloor} \mathbb E\left[\left(\mu_i^n(1.2)\right)^2\right] \\ &\le
				C\frac{\Delta_n^{\frac 1 2}} {\epsilon^2} \sup_{\left|x\right|\le \|\sigma\|_{\infty}}\psi'(x)\big(\lVert\tilde \sigma\rVert^2 + \lVert\tilde v\rVert^2\big) \rightarrow 
				0
			\end{align*}
			so $\sum_{i=1}^{\left\lfloor t/\Delta_n\right\rfloor}\mu_i^n(1)\ \stackrel{ucp}{\rightarrow} 0$. Regarding $\mu_i^n(2)$, since $\psi'$ is uniformly continuous on $[-\lVert \sigma\rVert _\infty, \lVert \sigma\rVert _\infty]$ choose $\delta>0$ for a given $\epsilon>0$ such that for all $s,\ t\le T$ we have $\left|\sigma_s-\sigma_t\right|\le \delta\Rightarrow \left| \psi'(\sigma_s)- \psi'(\sigma_t)\right|\le \epsilon$. Now, 
			\begin{align*}
				\left|\mu_i^n(2)\right| &\le \Delta_n^{-\frac 1 2}\, \epsilon\int_{(i-1)\Delta_n}^{i\Delta_n} \left|\sigma_{(i-1)\Delta_n}-\sigma_s\right|\,\text{d}s \\ &
				+ 2\, \Delta_n^{-\frac 1 2}/\delta \sup_{\left|x\right|\le \|\sigma\|_{\infty}}\left|\psi'(x)\right| \int_{(i-1)\Delta_n}^{i\Delta_n} \left|\sigma_{(i-1)\Delta_n}-\sigma_s\right|^2\,\text{d}s,
				\intertext{leading to}
				\mathbb P\Big[\sup_{t\le T}\big|\sum_{i=1}^{\left\lfloor t/\Delta_n \right\rfloor} \mu_i^n(2)\big| > \hat \epsilon\Big] &\le			\Delta_n^{-\frac 1 2}\, \epsilon /\, \hat\epsilon \ \mathbb E\Big[\int_0^T \left|\sigma_{(i-1)\Delta_n}-\sigma_s\right|\,\text{d}s\Big] \\ &
				+ 2\,  \Delta_n^{-\frac 1 2} \sup_{\left|x\right|\le \|\sigma\|_{\infty}}\left|\psi'(x)\right| \mathbb E\Big[\int_0^T \left|\sigma_{(i-1)\Delta_n}-\sigma_s\right|^2\,\text{d}s \Big]/(\hat\epsilon \delta) \\ &\le
				 C_T\Big(\epsilon\,  /\, \hat\epsilon + \Delta_n^{\frac 1 2}\,  \sup_{\left|x\right|\le \|\sigma\|_{\infty}}\left|\psi'(x)\right|\,   /\, (\hat\epsilon  \delta)\Big),		
			\end{align*}
			where we used Fubini's theorem. So choosing first $\epsilon$ small and then $n$ large finishes the proof of \textit{(iv)}, the last step in the proof of \eqref{CLT}. 
	\end{itemize}
\end{proof}

\paragraph{Proof of Theorems \ref{HFTheor3} and \ref{HFTheor4}}

As in the previous proof we may assume without loss of generality that the processes $\mu ,\sigma , \tilde{\mu},\tilde{\sigma},
\tilde{v}$ as well as the jump sizes $J, \tilde{J}$ are uniformly bounded in $(\omega,t)$. This is again justified by a standard
localisation procedure (see e.g. \cite{bgjps06}). Moreover, by the same localisation procedure, we may assume without loss of
generality that the jump sizes $J$ are bounded from below, i.e.
\[
|J_i|>\epsilon, \qquad 1\leq i\leq N_t, 
\]
for some $\epsilon >0$. Now, let $I_i^n=[(i-1)\Delta_n, i \Delta_n]$ and $\Omega_n:=\{ \omega\in\Omega: \#\{j\in \mathbb N: T_j \in J_i^n\}\le 1\}$, where $T_j$ denotes the the arrival time of the $j$'th jump of the Poisson process $N$. We clearly have
\begin{align*}
	\lim_{n \rightarrow \infty} \mathbb P[\Omega_{n}]&= \mathbb P[\Omega] = 1.
\end{align*}
Note that  each interval $I_i^n$ contains at most one jump of $X$ on $\Omega_n$ and each jump is at least of size $\epsilon$.

\paragraph{Proof of Theorem \ref{HFTheor3}}

The assertion $R(X,p)_t^n \pn R(X,p)_t$ for $p=2$ has been already proved in \cite{kp12}, so we show the result for $p>2$.
We write $X_t=X^c + Z_t$, where $X^c$ denotes the continuous part $X$ and $Z$ stands for the jump part. We define 
\[
K_n=\{i \le \left\lfloor t/\Delta_n\right\rfloor : \exists T_j \in I_i^n\}.
\]
Note that the cardinality of $K_n$ is finite almost surely as it is bounded by $N_t$. We decompose the statistic $R(X,p)_t^n$
as 
\begin{align} \label{rndec}
R(X,p)_t^n = \sum_{i\in K_n^c}  \sup_{s,u\in I_i^n} |X_s^c - X_u^c|^p 
+ \sum_{i\in K_n}  \sup_{s,u\in I_i^n} |X_s - X_u|^p.
\end{align} 
By Burkholder-Davis-Gundy inequality we conclude that $\E[\sup_{s,u\in I_i^n} |X_s^c - X_u^c|^p]\leq C_p \Delta_n^{p/2}$, and since
$p>2$, we obtain the convergence
\[
\sum_{i\in K_n^c}  \sup_{s,u\in I_i^n} |X_s^c - X_u^c|^p \pn 0.
\] 
Moreover, on $\Omega_n$, we have that 
\[
\sum_{i\in K_n}  \sup_{s,u\in I_i^n} |Z_s - Z_u|^p   - R(X,p)_t = \sum_{i=N_{\Delta_n \lfloor t/\Delta_n \rfloor}}^{N_t} |J_i|^p \pn 0.
\] 
Finally, we obtain by mean value theorem that 
\begin{align*}
& \E \left | \sum_{i\in K_n} \sup_{s,u\in I_i^n} |Z_s - Z_u|^p - \sup_{s,u\in I_i^n} |X_s - X_u|^p   \right| \\[1.5 ex]
&\leq p \E \sum_{i\in K_n} \sup_{s,u\in I_i^n} \max(|Z_s - Z_u|,|X_s - X_u|)^{p-1} \sup_{s,u\in I_i^n} |X_s^c - X_u^c| \\[1.5 ex]
& \pn 0,
\end{align*}
since the set $K_n$ is finite. Due to $\Omega_n\rightarrow \Omega$, we thus conclude the assertion of Theorem \ref{HFTheor3}. \qed

\paragraph{Proof of Theorem \ref{HFTheor4}}  

(i) We again use the decomposition \eqref{rndec} of $R(X,p)_t^n$. It holds that 
\[
\Delta_n^{-1/2} \E \Big[\sum_{i\in K_n^c}  \sup_{s,u\in I_i^n} |X_s^c - X_u^c|^p \Big]\leq \Delta_n^{(p-1)/2 -1}\rightarrow 0,
\]
since $p>3$. On the other hand we also have
\[
\Delta_n^{-1/2} \left(\sum_{i\in K_n}  \sup_{s,u\in I_i^n} |Z_s - Z_u|^p   - R(X,p)_t \right ) \pn 0. 
\]
Now, since all jump sizes $|J_i|$ are bounded by $\epsilon$ from below and $X^c$ is continuous, we obtain by mean value theorem
for all $i\in K_n$ on $\Omega_n$:
\begin{align*}
&\sup_{s,u\in I_i^n} (X_s - X_u)^p - \sup_{s,u\in I_i^n} (Z_s - Z_u)^p \\[1.5 ex]
&= p |\Delta Z_{T_i}|^{p-1} \left(
\sup_{\substack{s,u \in I_i^n\\ s<T_i^n\leq u}} (X^c_u-X^c_s) \boldsymbol 1_{\{\Delta Z_{T_i}>0\}}
+ \sup_{\substack{s,u \in I_i^n\\ u<T_i^n\leq s}} (X^c_u-X^c_s) \boldsymbol 1_{\{\Delta Z_{T_i}<0\}}  \right) \\[1.5 ex]
&+ o_{\mathbb P} (\Delta_n^{1/2}),
\end{align*} 
where $T_i^n$ denotes the jump time of $N_t$ in the interval $I_i^n$. Hence, we deduce the decomposition
\[
\Delta_n^{-1/2} \sum_{i\in K_n} \Big( \sup_{s,u\in I_i^n} (X_s - X_u)^p - \sup_{s,u\in I_i^n} (Z_s - Z_u)^p \Big)
= \sum_{i\in K_n} (\zeta_i^n + \overline{\zeta}_i^n) + o_{\mathbb P} (1),
\]
where 
\begin{align*}
\zeta_i^n &= p \Delta_n^{-1/2} |\Delta Z_{T_i^n}|^{p-1} \Big(
\sup_{\substack{s,u \in I_i^n\\ s<T_i^n\leq u}} (\sigma_{T_i^n -}(W_{T_i^n} - W_s) +
\sigma_{T_i^n }(W_u - W_{T_i^n})) \boldsymbol 1_{\{\Delta Z_{T_i^n}>0\}}  \\[1.5 ex]
&+ \sup_{\substack{s,u \in I_i^n\\ u<T_i^n\leq s}} (-\sigma_{T_i^n -}(W_{T_i^n} - W_u) -
\sigma_{T_i^n }(W_s - W_{T_i^n})) \boldsymbol 1_{\{\Delta Z_{T_i^n}<0\}}  \Big)
\end{align*}
and the quantity $\overline{\zeta}_i^n$ is defined via the identity
\[
\zeta_i^n + \overline{\zeta}_i^n = p \Delta_n^{-1/2}|\Delta Z_{T_i^n}|^{p-1} \Big(
\sup_{\substack{s,u \in I_i^n\\ s<T_i^n\leq u}} (X^c_u-X^c_s) \boldsymbol 1_{\{\Delta Z_{T_i^n}>0\}}
+ \sup_{\substack{s,u \in I_i^n\\ u<T_i^n\leq s}} (X^c_u-X^c_s) \boldsymbol 1_{\{\Delta Z_{T_i^n}<0\}}  \Big).
\]
Obviously the term $\zeta_i^n$ serves as the first order approximation while $\overline{\zeta}_i^n$ 
is the error term. Since $N$ and $W$ are independent, we obtain the stable convergence
\begin{align} 
(\kappa_i^n, \widetilde{W}_i^n)_{i\geq 1}&:=\Big( \Delta_n^{-1} \{T_i - \Delta_n \lfloor T_i/\Delta_n \rfloor \}, 
\Delta_n^{-1/2} \{W_{(i-1+s)\Delta_n} - W_{(i-1)\Delta_n}\}_{s\in [0,1]} \Big)_{i\geq 1} \nonumber \\
\label{helpweak} & \stab \Big(\kappa_i, \{\widetilde{W}_{i-1+s} - \widetilde{W}_{i-1}\}_{s\in [0,1]} \Big)_{i\geq 1},
\end{align} 
where $\kappa_i$ and $\widetilde{W}$ were defined in Section 4.2. This result is an immediate consequence 
of \cite[Lemma 6.2]{jp12}, but it can be easily shown in a straightforward manner.
Now, by properties of stable convergence and continuous mapping theorem,
we conclude that 
\[
\sum_{i\in K_n} \zeta_i^n \stab U(X,p)_t
\]
for any fixed $t>0$. Indeed this can be deduced from the stable convergence in \eqref{helpweak}, by defining the function $f_{i},
:
\R^3 \times [0,1] \times C([0,1])$ via
\begin{align*}
f_{i}(x,y,z) = p x_1 \sup_{0\leq s<y\leq u\leq 1} \Big( x_2(z(y)-z(s)) + x_3 (z(u)-z(y)) \Big) 
\end{align*} 
and observing that 
\begin{align*}
\zeta_i^n &= f_{i}\Big((|\Delta Z_{T_i^n}|^{p-1} \boldsymbol 1_{\{\Delta Z_{T_i^n}>0\}},
\sigma_{T_i^n -}, \sigma_{T_i^n }),\kappa_i^n,\widetilde{W}_i^n \Big) \\
&+ f_{i}\Big((|\Delta Z_{T_i^n}|^{p-1} \boldsymbol 1_{\{\Delta Z_{T_i^n}<0\}},
\sigma_{T_i^n -}, \sigma_{T_i^n }),\kappa_i^n,-\widetilde{W}_i^n \Big).
\end{align*}
Hence, to complete the proof we need to show that $\sum_{i\in K_n} \overline{\zeta}_i^n \pn 0$.
Since the processes $\mu ,\sigma$ and the jump sizes $J_i$ are uniformly bounded, we deduce
by Burkholder-Davis-Gundy inequality that
\[
\E[|\overline{\zeta}_i^n|^2] \leq C \Delta_n^{-1} \left( \Delta_n^2 + \int_{T_i^n}^{i\Delta_n} (\sigma_u-\sigma_{T_i^n})^2 du
+ \int_{(i-1)\Delta_n}^{T_i^n} (\sigma_u-\sigma_{T_i^n-})^2 du \right),  
\]
where the right side converges to 0, because $\sigma$ is c\'adl\'ag. This completes the proof since $K_n$ is finite. \qed 
\newline \newline
(ii)  Now let us consider the case $p=2$. According to the previous proof and the limiting results of \cite{cp07}
for the continuous case, we obtain the following asymptotic decomposition
\begin{align*}
\Delta_n^{-1/2} (R(X,2)_t^n - R(X,2)_t) = \sum_{i\in K_n} \zeta_i^n + \sum_{i\in K_n^c} \tilde{\zeta}_i^n + o_{\mathbb P}(1),
\end{align*}
where $\zeta_i^n$ has been defined in the previous step (now with $p=2$) and $\tilde{\zeta}_i^n$ serves as the first order 
approximation in the continuous case, i.e.
\[
\tilde{\zeta}_i^n = \Delta_n^{-1/2} \sigma_{(i-1)\Delta_n}^2 \Big( \sup_{s,u \in I_i^n} (W_u-W_s)^2 - \Delta_n^{-1}\lambda_{3,2} \Big).
\]
Now, we need to prove joint stable convergence of the vector $\Big(\sum_{i\in K_n} \zeta_i^n, 
 \sum_{i\in K_n^c} \tilde{\zeta}_i^n\Big)$. This problem is closely related to \cite[Lemma 5.8]{jacod08}. Indeed, following 
exactly the same proof steps, which are based on certain conditioning arguments,  it is sufficient to prove 
the stable central limit theorem for each component of the vector (indeed, the two stable limits are independent conditionally
on $\mathcal F$). But the stable convergence for the first component follows from the previous step
and the stable convergence for the second component has been shown in \cite{cp07} under the conditions of Theorem
\ref{HFTheor4}. This completes the proof. \qed

\section{References}

\bibliography{AfpdfocIs}
\bibliographystyle{ieeetr}

\end{document}